\newcommand{\mb}{\mathbf}
\newcommand{\mc}{\mathcal}
\renewcommand{\Re}{\mathrm{Re}\,}
\newcommand{\rg}{\mathrm{rg}\,}
\newcommand{\N}{\mathbb{N}}
\newcommand{\R}{\mathbb{R}}
\newcommand{\B}{\mathbb{B}}
\DeclareMathOperator{\range}{rg}
\renewcommand{\r}{\rho}
\renewcommand{\d}{\delta}
\renewcommand{\t}{\tau}
\newtheorem{lemma}{Lemma}[section]
\newtheorem{theorem}[lemma]{Theorem}
\newtheorem{corollary}[lemma]{Corollary}
\newtheorem{proposition}[lemma]{Proposition}
\theoremstyle{remark}
\newtheorem{remark}[lemma]{Remark}
\theoremstyle{definition}
\numberwithin{equation}{section}
\title[]{Singularity formation for the higher dimensional Skyrme model}
\author{Michael McNulty}
\address{Michigan State University, 619 Red Cedar Road, East Lansing, MI 48824}
\email{mcnult50@msu.edu}
\begin{document}
\begin{abstract}
This paper demonstrates that singularities form in the classical $(5+1)$-dimensional, co-rotational Skyrme model. It was recently proven by Chen, Sch\"orkhuber, and the author that the strong field limit of the $(5+1)$-dimensional, co-rotational Skyrme model admits an explicit self-similar solution which is asymptotically stable within backwards light cones. Seeded by the limiting model, we construct an open set of initial data whose evolution within a backwards light cone, according to the full model, suffers a gradient blowup in finite time. Moreover, the singularity develops at the self-similar rate and possesses an asymptotic profile given by the self-similar profile of the strong field model.
\end{abstract} 

\maketitle

\section{Introduction}\label{Introduction}
	We consider a natural generalization of the Skyrme model to higher dimensions. For $d\geq2$, and maps $\Psi:\mathbb R^{1+d}\to\mathbb S^d$, the (generalized) Skyrme model is described by the action functional
		\begin{equation}
			\mc S_{Sky}[\Psi]=\alpha \mc S_{WM}[\Psi]+\frac{\beta}{4}\int_{\R^{1+d}}\Big(\big(\eta^{\mu\nu}(\Psi^*h)_{\mu\nu}\big)^2-(\Psi^*h)_{\mu\nu}(\Psi^*h)^{\mu\nu}\Big)d\eta \label{skyrme action}
		\end{equation}
	where $\alpha, \beta \geq 0$, $\eta=\text{diag}(-1,1,\dots,1)$ denotes the Minkowski metric, $h$ is the standard round metric on $\mathbb S^d$, $(\Psi^*h)_{\mu\nu}=h_{ab}(\Psi)\partial_\mu \Psi^a\partial_\nu \Psi^b$ for\footnote{We are using the Einstein summation convention of implicitly summing over repeated lower and upper indices.} $\mu,\nu=0,\dots,d$ and $a,b=1,\dots,d$, and 
		\begin{equation}
			\mc S_{WM}[\Psi]=\frac{1}{2}\int_{\R^{1+d}}\eta^{\mu\nu}(\Psi^*h)_{\mu\nu}d\eta \label{wavemaps action}
		\end{equation}
	is the classical wave maps action describing the nonlinear sigma-model in the special case $d=3$. In particular, we restrict our attention to \textit{co-rotational} (one-equivariant) maps. These are maps $\Psi$ which respect the rotational symmetry of the domain and target in the sense that a rotation in the domain is mapped to the same rotation in the image. Expressed in spherical coordinates on their domain and co-domain, co-rotational maps take the form
		$$
			\Psi(t,r,\omega)=\big(\psi(t,r),\omega\big)
		$$
	for some function $\psi:\R \times[0,\infty)\to \R$ corresponding to a polar angle on $\mathbb S^d$ and $\omega \in \mathbb S^{d-1}$. For such maps, the Euler-Lagrange equation for \eqref{skyrme action} reduces to the following radial quasilinear wave equation
		\begin{align}
		\begin{split}
			\Big(\alpha&+\frac{\beta(d-1)\sin^2(\psi)}{r^2}\Big)\big(\partial_t^2\psi-\partial_r^2\psi\big)-\frac{d-1}{r}\Big(\alpha+\frac{\beta(d-3)\sin^2(\psi)}{r^2}\Big)\partial_r\psi
			\\
			&+\frac{(d-1)\sin(2\psi)}{2r^2}\bigg(\alpha+\beta\Big(\big(\partial_t\psi\big)^2-\big(\partial_r\psi\big)^2+\frac{(d-2)\sin^2(\psi)}{r^2}\Big)\bigg)=0. \label{skyrme eqn}
		\end{split}
		\end{align}
	We refer the reader to Appendix A of \cite{CMS23} for the details of its derivation. 
	
	In the 1960s, there was significant interest in modeling the dynamics of elementary particles via geometric field theories such as \eqref{skyrme action} and \eqref{wavemaps action}. Tony Skyrme established his namesake model in nuclear physics \cite{S61a,S61b,S62} by introducing a higher-order correction term to the previously well-established nonlinear sigma-model for pions \cite{GL60}. The intent of this correction term was to prevent the formation of singularities in the nonlinear sigma model and to allow for the existence of particle-like solutions, i.e., solitons. At the time, a heuristic relating scale invariance of the nonlinear sigma model's equation of motion and its conserved energy's behavior under rescaling suggested that particle-like solutions would shrink to a point. The existence of self-similar solutions for the nonlinear sigma model was rigorously proven by Shatah \cite{S88}. 
	
	Skyrme's correction to the nonlinear sigma model breaks scale invariance, thereby preventing the existence of \textit{exactly} self-similar solutions. Moreover, its conserved energy's behavior under rescaling (see Section \ref{The strong field limit of the Skyrme model}) suggests that, at least for the original three-dimensional model, soliton solutions should not shrink to a point. By now, it is rigorously known that Skyrme's correction term indeed succeeds in its original goal. Namely, the three-dimensional model admits a topological soliton - the \textit{Skyrmion} - and large data for the associated Cauchy problem yield globally regular solutions. The former was proven by Kapitanskii and Ladyzhenskaya \cite{KL83} as well as McLeod and Troy \cite{MT91}. The latter was first proven by Li \cite{L21}. The regularity of the initial data in Li's result was then improved upon by Geba and Grillakis \cite{GG18}. See Section \ref{Discussion of related results and history of the Skyrme model} for a detailed summary of these and similar results. 
	
	To the best of the author's knowledge, nothing appears to be known concerning the long-time dynamics of solutions of Equation \eqref{skyrme eqn} in dimensions $d\geq4$. The present work provides the first such result. Namely, for $d=5$, we prove that there exists an open set of regular initial data for the Cauchy problem associated to Equation \eqref{skyrme eqn}  whose solution exists within a backwards light cone and suffers a gradient blowup in finite time. Moreover, the asymptotic profile of the solution is given by an explicit, stable, self-similar solution of Equation \eqref{skyrme eqn} in the limiting case $\alpha=0$.

	\subsection{The strong field limit of the Skyrme model}\label{The strong field limit of the Skyrme model}
		A direct calculation verifies that solutions of Equation \eqref{skyrme eqn} formally conserve the energy-type quantity
			$$
				E[\psi]=\int_0^\infty\bigg(\Big(\alpha+\frac{\beta(d-1)\sin ^2(\psi)}{r^2}\Big)\frac{\big(\partial_t\psi\big)^2+\big(\partial_r\psi\big)^2}{2}+\frac{d-1}{2}\Big(\alpha+\frac{\beta(d-2)\sin^2(\psi)}{2r^2}\Big)\frac{\sin^2(\psi)}{r^2}\bigg)r^{d-1}dr.
			$$
		In particular, this energy decomposes into the following pieces
			$$
				E_2[\psi]=\frac{1}{2}\int_0^\infty\bigg(\big(\partial_t\psi\big)^2+\big(\partial_r\psi\big)^2+(d-1)\frac{\sin^2(\psi)}{r^2}\bigg)r^{d-1}dr
			$$
		and
			$$
				E_4[\psi]=\frac{d-1}{2}\int_0^\infty\sin^2(\psi)\bigg(\big(\partial_t\psi\big)^2+\big(\partial_r\psi\big)^2+\frac{(d-2)}{2}\frac{\sin^2(\psi)}{r^2}\bigg)r^{d-3}dr
			$$
		so that 
			$$
				E[\psi]=\alpha E_2[\psi]+\beta E_4[\psi].
			$$ 
		Given a solution $\psi$ of Equation \eqref{skyrme eqn}, denote by $\psi_\lambda$ its rescaling
			$$
				\psi_\lambda(t,r)=\psi(\lambda^{-1}t,\lambda^{-1}r),\quad\lambda>0.
			$$
		We then obtain the following relationship between the energy of $\psi$ and that of $\psi_\lambda$
			\begin{equation}
				E[\psi_\lambda]=\alpha\lambda^{d-2}E_2[\psi]+\beta\lambda^{d-4}E_4[\psi]. \label{energy rescale}
			\end{equation}
		If $d\geq5$, then $E[\psi_\lambda]\to0^+$ as $\lambda\to0^+$ assuming $E_2[\psi],E_4[\psi]<\infty$. Moreover, for $\lambda\ll1$, the leading-order term in \eqref{energy rescale} is $\beta\lambda^{d-4}E_4[\psi]$. Motivated by this observation, we momentarily restrict our attention to the special case of Equation \eqref{skyrme eqn} with $\alpha=0$ which reads
			\begin{equation}
				\frac{\sin^2(\psi)}{r^2}\Big(\partial_t^2\psi -\partial_r^2\psi-\frac{d-3}{r}\partial_r\psi\Big)+\frac{\sin(2\psi)}{2 r^2}\bigg(\big(\partial_t\psi\big)^2-\big(\partial_r\psi\big)^2+\frac{(d-2)\sin^2(\psi)}{r^2}\bigg)=0. \label{sf skyrme eqn}
			\end{equation}
		Note that solutions of Equation \eqref{sf skyrme eqn} conserve $E_4[\psi]$. Equation \eqref{sf skyrme eqn} is called the equation of motion of the \textit{co-rotational, strong field Skyrme model}.

		In \cite{CMS23}, Chen, Sch\"orkhuber, and the author introduced an explicit self-similar solution of Equation \eqref{sf skyrme eqn} which exists in any dimension $d\geq5$ and is given by 
			\begin{align}
				\psi^T(t,r)=U\Big(\frac{r}{T-t}\Big),\quad  T>0 \label{sf skyrme soln}
			\end{align}
		with the profile 
			\begin{equation}
 				U(\rho)=\arccos\left (\frac{a- b \rho^2}{a+\rho^2} \right ) \label{Profile_U_d}
			\end{equation}
		where $a:=\frac{1}{3} \left(2 (d-4)+\sqrt{3(d-4) (d-2)}\right)$ and $b :=2 \sqrt{\frac{d-4}{3(d-2)}}+1$. Observe that $U$ is smooth for $\rho \in [0, \rho^*]$, where $\rho^* = \sqrt{\frac{2a}{b-1}} > 1$ and $U(\rho^*) = \pi$. In particular, $\psi^T$ is smooth within the radial backward light cone
			$$
				\mc C_T:=\{(t,r):0 \leq t<T,0\leq r\leq T-t\}
			$$
		 and suffers a gradient blowup at the origin as $t \to T^{-}$ since 
			$$  
				|\partial_r\psi^T(t,0)|=\frac{c}{T-t}
			$$
		for some $c>0$ depending on $d$. Moreover, the solution of Equation \eqref{sf skyrme eqn} in $\mc C_1$ with the initial data 
			$$
				\big(\psi(0,r),\partial_t\psi(0,r)\big)=\big(U(r),\Lambda U(r)\big),
			$$
		where $\Lambda:=r\partial_r$, is exactly $\psi^1$.

	\subsection{The main result}\label{The Main Result}
		We restrict our attention to $d=5$. In this case, the expression for the blowup profile \eqref{Profile_U_d} can be equivalently written as 
			$$
				U(\rho)=2\arctan\Big(\frac{2\rho}{\sqrt{5-\r^2}}\Big).
			$$
		We exhibit stable blowup at the self-similar rate for Equation \eqref{skyrme eqn} within a backwards light cone and provide a precise description of the asymptotic blowup profile in terms of $U$. 
		
		\begin{theorem} \label{actual main result}
			Fix $d=5$ and $\alpha=\beta=1$. There exist positive constants $\delta,C,\omega$ such that the following holds.  For any $\lambda\in(0,\delta]$ and real-valued functions $(\psi_0,\psi_1)\in H^6(\mathbb B_{2\delta}^7)\times H^5(\mathbb B_{2\delta}^7)$ with
				$$
					\||\cdot|^{-1}(\psi_0,\psi_1)\|_{H^6(\mathbb B_{2\delta}^7)\times H^5(\mathbb B_{2\delta}^7)}\leq\lambda^{5/2}\frac{\delta}{C},
				$$
			there exists a unique $T\in[1-\delta,1+\delta]$ and a unique solution $\psi\in C^2(\mc C_{\lambda T})$ of Equation \eqref{skyrme eqn} depending Lipschitz continuously on $(\psi_0,\psi_1)$ and $\lambda$ such that $\psi(t,0)=0$ for $t\in[0,\lambda T)$ and 
				$$
					\psi(0,r)=U(\lambda^{-1}r)+\psi_0(r),\;\partial_t\psi(0,r)=\Lambda U(\lambda^{-1}r)+\psi_1(r).
				$$
			Moreover, the solution has the decomposition
				$$
					\psi(t,r)=U\bigg(\frac{r}{\lambda T-t}\bigg)+\phi\bigg(t,\frac{r}{\lambda T-t}\bigg)
				$$
			with 
				$$
					\||\cdot|^{-1}\phi(t,\cdot)\|_{H^5(\B^7)}+\big\||\cdot|^{-1}\big((\lambda T-t)\partial_t\phi(t,\cdot)+\Lambda\phi(t,\cdot)\big)\big\|_{H^4(\B^7)}\leq\delta  (T-t)^{\omega},
				$$
			for all $t \in [0,\lambda T)$.
		\end{theorem}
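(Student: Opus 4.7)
The plan is to adapt the now-standard framework for stable self-similar blowup (as in the Donninger-Sch\"orkhuber program) to the quasilinear Skyrme equation, leveraging the linear stability analysis of the strong-field limit already established in \cite{CMS23}. First I pass to similarity variables $\tau = -\log((\lambda T - t)/(\lambda T))$ and $\rho = r/(\lambda T - t)$ so that the profile $U$ becomes a stationary object on the fixed spatial domain $\rho \in [0,1]$, and write $\psi = U + v$. The evolution of $v$ is then cast as a first-order abstract Cauchy problem of the form $\partial_\tau v = \mathcal{L}_\lambda v + \mathcal{N}_\lambda(v) + \mathcal{F}_\lambda$ on a Hilbert space $\mathcal{H}$ tailored to the regularity in the statement. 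The parameter $\lambda$ enters as a small coupling $\lambda e^{-\tau}$ multiplying the $\alpha$-contributions, so that $\mathcal{L}_\lambda$ is a perturbation of the strong-field linearization $\mathcal{L}_0$ from \cite{CMS23}, and the forcing $\mathcal{F}_\lambda$ likewise vanishes as $\lambda \to 0^+$. The seven-dimensional ambient ball $\mathbb{B}^7$ in the hypothesis reflects the substitution $u = \psi/|\cdot|$, which turns co-rotational data on $\mathbb{R}^5$ into functions on $\mathbb{R}^7$ with a nicer wave structure and allows one to invoke standard radial Sobolev embeddings.

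Next comes the linear theory on $\mathcal{H}$. The goal is to show that $\mathcal{L}_\lambda$ generates a strongly continuous semigroup whose spectrum in the half-plane $\operatorname{Re} z \geq -\omega$ consists of a single simple eigenvalue at $z = 1$, with eigenfunction coming from the gauge symmetry $\partial_T U^T|_{T=1}$ associated with the free choice of blowup time. For $\lambda = 0$ this is exactly the content of \cite{CMS23}; for small $\lambda > 0$ I plan to use uniform resolvent bounds together with a Kato-type perturbation argument to preserve the spectral picture. Extracting the Riesz projection $P$ onto the one-dimensional unstable eigenspace then yields a decay estimate $\|e^{\tau \mathcal{L}_\lambda}(I - P)\|_{\mathcal{H} \to \mathcal{H}} \lesssim e^{-\omega \tau}$ on the stable subspace, uniform in $\lambda \in [0,\delta]$.

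The nonlinear stage is a Duhamel fixed-point argument in the Banach space of curves $\tau \mapsto v(\tau) \in \mathcal{H}$ with norm $\sup_\tau e^{\omega \tau} \|v(\tau)\|_{\mathcal{H}}$. One integrates the semigroup against the perturbed data and the nonlinearity $\mathcal{N}_\lambda(v) + \mathcal{F}_\lambda$, modified on the unstable subspace by a correction term tuned to the free parameter $T$. A contraction argument in a small ball yields, for each $T \in [1-\delta, 1+\delta]$, a unique solution of the modified equation; a one-dimensional intermediate-value / Brouwer-type argument in $T$ then selects the unique blowup time for which the correction vanishes, producing a genuine solution of the original equation. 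The boundary condition $\psi(t,0)=0$ and the $C^2$-regularity on $\mathcal{C}_{\lambda T}$ follow from the hypothesis $|\cdot|^{-1}(\psi_0,\psi_1) \in H^6 \times H^5$ together with Sobolev embedding on $\mathbb{B}^7$, while the stated decay of $\phi$ is simply the fixed-point norm of $v$ re-expressed in the original coordinates.

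The main obstacle, in my view, is the \emph{quasilinear} character of \eqref{skyrme eqn}: the principal coefficient $\alpha + \beta(d-1)\sin^2\psi/r^2$ depends on the unknown itself, so the semigroup and resolvent bounds for $\mathcal{L}_\lambda$ — and especially their uniformity in $\lambda \in [0,\delta]$ — must be established with considerable care, and the nonlinear estimates for $\mathcal{N}_\lambda$ must close at a regularity level high enough to control the $\psi$-dependence of the top-order symbol near the cone tip $\rho = 0$, where $\sin^2(\psi)/r^2$ is a delicate quantity. This is precisely what forces the choice of $H^6 \times H^5$ rather than a more naive energy space, and it is where most of the technical work will go; by contrast, the mode-stability argument in $T$ and the global semigroup structure are by now well understood thanks to \cite{CMS23} and its precursors.
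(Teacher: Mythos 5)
Your outline reproduces the general Donninger--Sch\"orkhuber scheme, but it misses the two steps that actually make the paper's proof work, and one of your key steps would fail as stated. First, the paper does not attack the quasilinear equation head-on: after the rescaling $(t,r)\mapsto(\lambda^{-1}t,\lambda^{-1}r)$ and the substitution $u=r^{-1}\psi$, it divides the equation by the non-degenerate coefficient $\lambda^2+4\sin^2(ru)/r^2$, which turns \eqref{skyrme eqn} into the \emph{semilinear} radial wave equation \eqref{semilinear skyrme eqn} on $\R^{1+7}$. This reduction is what allows the linear theory of \cite{CMS23} to be used verbatim; your plan keeps the quasilinear principal part and would therefore require new generator/resolvent theory for a variable-coefficient, solution-dependent second-order operator, which \cite{CMS23} does not supply and which your proposal does not construct. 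Second, your idea of absorbing the $\alpha$-contributions into a $\lambda$-dependent generator $\mathcal L_\lambda$ and preserving the spectral picture by Kato perturbation cannot work as written: in similarity variables these terms carry the explicit factor $\lambda^2T^2e^{-2\tau}$ (you even note the $e^{-\tau}$ weight), so they are genuinely non-autonomous and cannot sit inside a strongly continuous semigroup generator. The paper instead keeps the $\lambda$-independent strong-field generator $\mathbf L=\mathbf L_0+\mathbf L'$ and puts \emph{all} wave-maps contributions --- including the zeroth-order forcing and the terms linear in the perturbation, which are not locally Lipschitz-small by themselves --- into the source $\mathbf G_{\lambda,T}(\tau,\cdot)$, and then proves the uniform-in-$\tau$ bounds and Lipschitz estimates of Proposition \ref{lipschitz nonlinearity estimate} (Taylor expansion in the field variables, explicit control of the coefficients $\mathcal G_0,\dots,\mathcal G_3$, cutoffs and Moser estimates). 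That proposition is the technical heart of the paper and has no counterpart in your outline.

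There are two further gaps relative to the statement you must prove. The theorem asserts \emph{uniqueness} of $T$ and Lipschitz dependence of $T$ (and of the solution) on $(\psi_0,\psi_1)$ and $\lambda$; an intermediate-value/Brouwer selection of the blowup time gives existence only, whereas the paper obtains $T$ as the fixed point of a contraction in $\beta=T-1$ (Lemma \ref{correction is zero}), which is what yields uniqueness and the Lipschitz dependence, feeding off the Lipschitz estimates \eqref{GlT LTlip} and Lemma \ref{id operator}. Finally, two smaller points: the $\lambda^{5/2}$ smallness in the hypothesis is precisely the Sobolev rescaling cost incurred when transferring the data to unit scale (the paper proves the theorem via the auxiliary Theorem \ref{main result} and the identities $f(r)=r^{-1}\psi_0(\lambda r)$, $g(r)=\lambda r^{-1}\psi_1(\lambda r)$), so in your direct-at-scale-$\lambda T$ setup you must still verify that the data perturbation is $O(\delta)$ in the working space; and $C^2$ regularity up to the blowup time does not follow from Sobolev embedding of $H^5$ alone --- the paper upgrades the strong solution to $\mathcal H^6$ by a local well-posedness plus Gr\"onwall argument and invokes semigroup regularity theory (Proposition \ref{classical soln}) before embedding.
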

		
		\begin{remark}\label{On the time of blowup}[On the time of blowup]
			Consider the special case of Theorem \ref{actual main result} with $\alpha=0$ and $\psi_0,\psi_1=0$. From the previous discussion, we know that the solution within $\mathcal C_\lambda$ is $\psi^\lambda$. According to \cite{CMS23}, when $\alpha=0$ and $\psi_0,\psi_1$ are small in $H^6(\mathbb B_{2\delta}^7)\times H^5(\mathbb B_{2\delta}^7)$, there is a unique $T\in[1-\delta,1+\delta]$ and unique solution $\psi$ in $\mathcal C_{\lambda T}$ as in Theorem \ref{actual main result}. In this case, the shift of the blowup time is due to the fact that $(\psi^T)_{T>0}$ represents a one-parameter family of solutions which are related via time translations of each other. This symmetry yields an unstable eigenvalue of the corresponding linearization of Equation \eqref{sf skyrme eqn} around $\psi^T$ for any $T>0$. In \cite{CMS23}, the shift of blowup time suppresses this instability. However, for $\alpha\neq0$, this one-parameter family is no longer a family of solutions and so there is nothing obvious to linearize Equation \eqref{skyrme eqn} around. Regardless, the shift of blowup time is necessary given that we use the evolution generated by the linearization of Equation \eqref{sf skyrme eqn}, the strong field equation, around $\psi^{\lambda T}$ in order to control our solution of Equation \eqref{skyrme eqn}, the full equation. See Section \ref{Outline of the proof} and \ref{Operator and Semigroup Theory} for more details.
		\end{remark}
		
		\begin{remark}[One-parameter family of blowup solutions] \label{One-parameter family of blowup solutions}
				By taking $(\psi_0,\psi_1)=(0,0)$, Theorem \ref{actual main result} shows that the five-dimensional Skyrme model possesses a one-parameter family of solutions which blowup at the self-similar rate. This family of solutions is parameterized by $\lambda$ which can be at most a potentially very small number $\delta$. The solution corresponding to the parameter $\lambda$ blows up at time $\lambda T$ for a unique $T\in[1-\delta,1+\delta]$. In this sense, we can think of $\lambda$ as parameterizing the blowup times for this family of solutions. Since $\delta$ is potentially a very small number, the solutions we construct blowup very quickly. It remains unclear if this interval can be extended, i.e., if solutions with large blowup times exist.
		\end{remark}

		\begin{remark}[Regularity]
			The regularity assumptions in Theorem \ref{main result} and Corollary \ref{actual main result} ensure $L^{\infty}$-control of the perturbation and its time derivative. This allows us to define and control the seemingly degenerate nonlinearity in a sufficiently small neighborhood of $\psi^{\lambda T}$. In addition, we assume smallness of the initial data in an even stronger topology which we use to obtain Lipschitz-dependence of the blowup time via a fixed point argument and regularity of the solution up to the time of blowup.
		\end{remark}

		\subsubsection{Reformulation as a semilinear wave equation on $\R^{1+7}$} \label{Reformulation as a semilinear wave equation}
			Theorem \ref{actual main result} is proven by first reformulating it as a related semilinear wave equation. We then prove an auxiliary result, Theorem \ref{main result}, upon which Theorem \ref{actual main result} follows. The paper is then devoted to proving Theorem \ref{actual main result}.
			
			We begin this reformulation by rescaling the coordinates
				\begin{equation}
					(t,r)\mapsto(t',r')=\big(\lambda^{-1}t,\lambda^{-1}r\big),\quad\lambda>0
\label{rescaling transformation}.
				\end{equation}
			Then $\psi$ is a solution of Equation \eqref{skyrme eqn} if and only if $\tilde\psi$ is a solution of 
				\begin{align}
				\begin{split}
					\Big(\alpha\lambda^2&+\frac{4\beta\sin^2(\tilde \psi)}{(r')^2}\Big)\big(\partial_{t'}^2\tilde \psi-\partial_{r'}^2\tilde \psi\big)-\frac{4}{r'}\Big(\alpha\lambda^2+\frac{2\beta\sin^2(\tilde \psi)}{(r')^2}\Big)\partial_{r'}\tilde \psi
					\\
					&+\frac{4\sin(2\tilde \psi)}{2(r')^2}\bigg(\alpha\lambda^2+\beta\Big(\big(\partial_{t'}\tilde \psi\big)^2-\big(\partial_{r'}\tilde \psi\big)^2+\frac{3\sin^2(\tilde \psi)}{(r')^2}\Big)\bigg)=0 \label{rescaled skyrme eqn}
				\end{split}
				\end{align}
			where $\psi(t,r)=\tilde\psi(\lambda^{-1}t,\lambda^{-1}r)$. With this observation made, we abuse notation and write $(t,r)$ instead of $(t',r')$ and $\psi$ instead of $\tilde \psi$ and clarify the identification when necessary. Separating the wave maps and strong field terms in Equation \eqref{rescaled skyrme eqn} yields
				\begin{align*}
					\alpha\lambda^2&\bigg(\partial_t^2\psi-\partial_r^2\psi-\frac{4}{r}\partial_r\psi+\frac{2\sin(2\psi)}{r^2}\bigg)
					\\
					&+\frac{4\beta\sin^2(\psi)}{r^2}\bigg(\partial_t^2\psi-\partial_r^2\psi-\frac{2}{r}\partial_r\psi+\cot(\psi)\Big(\big(\partial_t\psi\big)^2-\big(\partial_r\psi\big)^2\Big)+\frac{3\sin(2\psi)}{2r^2}\bigg)=0.
				\end{align*}
			The precise values of $\alpha$ and $\beta$ no longer play an imporant role and so we set $\alpha=\beta=1$.
			
			To ensure finite energy near the origin, we impose the boundary condition $\psi(t,0)=0$ for all $t$ and switch to the new independent variable 
				\begin{equation}
					u(t,r):=r^{-1}\psi(t,r). \label{rescaled soln}
				\end{equation}
			The self-similar solution \eqref{sf skyrme soln} of the strong field Skyrme model transforms into
				$$
					u^T(t,r):=r^{-1}\psi^T(t,r) = \frac{1}{T-t} \tilde U\left (\frac{r}{T-t} \right ),
				$$
			for $\tilde U(\rho) := \rho^{-1} U(\rho)$. Equation \eqref{rescaled skyrme eqn} becomes
				\begin{align*}
					&\lambda^2\bigg(\partial_t^2u-\partial_r^2u-\frac{6}{r}\partial_ru-f_{WM}(ru,r)\bigg)
					\\
					&+\frac{4\sin^2(ru)}{r^2}\bigg(\partial_t^2u-\partial_r^2u-\frac{6}{r}\partial_ru-f_{SF}(ru,r\partial_ru,r\partial_tu,r)\bigg)=0
				\end{align*}
			where
				$$
					f_{WM}(ru,r)=\frac{4ru-2\sin(2ru)}{r^3}
				$$
			and
				\begin{align}
				\begin{split}
					f_{SF}\big(r u,r\partial_r u,r\partial_t u,r\big)=&-\frac{1}{r}\cot(r u)\big((r\partial_t u)^2-(r\partial_r u)^2\big)-\frac{2}{r^2}\big(1-r u\cot(r u)\big)r\partial_r u
					\\
					&-\frac{\frac{3}{2}\sin(2r u)-2r u-(r u)^2\cot(r u)}{r^3} \label{def:fsf}
				\end{split}
				\end{align}	
			denote the wave maps and strong field Skyrme model nonlinearities respectively. By adding and subtracting $f_{SF}$ in the first line and regrouping terms, we obtain
				\begin{align*}
					\bigg(\lambda^2+&\frac{4\sin^2(ru)}{r^2}\bigg)\bigg(\partial_t^2u-\partial_r^2u-\frac{6}{r}\partial_ru-f_{SF}(ru,r\partial_ru,r\partial_tu,r)\bigg)
					\\
					&+\lambda^2\big(f_{SF}(ru,r\partial_ru,r\partial_tu,r)-f_{WM}(ru,r)\big)=0.
				\end{align*}
			Dividing through by the coefficient on the first term, we obtain the semilinear wave equation
				\begin{equation}
					\partial_t^2u-\partial_r^2u-\frac{6}{r}\partial_ru-f_{SF}(ru,r\partial_ru,r\partial_tu,r)-\lambda^2G_\lambda\big(r u,r\partial_r u,r\partial_t u,r\big)=0 \label{semilinear skyrme eqn}
				\end{equation}
			where
				$$
					G_\lambda\big(r u,r\partial_r u,r\partial_t u,r\big)=g_\lambda\big(ru,r\big)G\big(r u,r\partial_r u,r\partial_t u,r\big)
				$$
			with
				$$
					g_\lambda\big(ru,r\big)=\bigg(\lambda^2+\frac{4\sin^2(ru)}{r^2}\bigg)^{-1}
				$$
			and
				$$
					G\big(r u,r\partial_r u,r\partial_t u,r\big)=f_{WM}(ru,r)-f_{SF}(ru,r\partial_ru,r\partial_tu,r).
				$$
			In this sense, the original quasilinear wave equation, Equation \eqref{skyrme eqn}, is effectively a $(1+7)$-dimensional semilinear wave equation.
			
			In the following, we denote the (non-radial) backward light cone in $(1+7)$-dimensions by 
				$$ 
					\mathfrak C_T := \{(t,x) \in [0,T) \times \R^7: 	 |x| \leq T-t \}.
				$$
			For notation and conventions, see Section \ref{Notation and conventions}.
				\begin{theorem}\label{main result}
					There exist positive constants $\delta,C,\omega$ such that the following holds. For any $\lambda\in(0,\d]$ and $(f,g)\in H^6(\mathbb B_{2}^7)\times H^5(\mathbb B_{2}^7)$ be real-valued functions with
						$$
							\|(f,g)\|_{H^6(\mathbb B_{2}^7)\times H^5(\mathbb B_{2}^7)}\leq\frac{\delta}{C},
						$$
					there exists a unique $T\in[1-\delta,1+\delta]$ and a unique solution $u\in C^2_\text{rad}(\mathfrak C_T)$ of Equation \eqref{semilinear skyrme eqn} depending Lipschitz continuously on $(f,g)$ and $\lambda$ such that
						\begin{equation}
							u(0,r)=\tilde U(r)+f(r),\;\partial_tu(0,r)=(1+\Lambda)\tilde U(r)+g(r). \label{initial data}
						\end{equation}
					Moreover, the solution has the decomposition
						\begin{equation}
							u(t,r)  = \frac{1}{T-t} \left [\tilde U \bigg (\frac{r}{T-t} \bigg ) + \varphi \bigg (t,\frac{r}{T-t}  \bigg) \right ] \label{rescaled decomposition}
						\end{equation}
					with 
						\begin{equation} 
							\|\varphi(t,\cdot)\|_{H^5(\B^7)}+\big\|(T-t)\partial_t\varphi(t,\cdot)+(1+\Lambda)\varphi(t,\cdot)\big\|_{H^4(\B^7)} \leq\delta  (T-t)^{\omega},\label{convergence}
						\end{equation}
					for all $t \in [0,T)$. 
				\end{theorem}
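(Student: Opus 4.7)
The plan is to translate Equation \eqref{semilinear skyrme eqn} into similarity variables adapted to the self-similar profile $\tilde U$ and then treat the blowup statement as a stable-manifold problem around the static solution $(\tilde U, (1+\Lambda)\tilde U)$ of the resulting autonomous strong-field flow, with the $\lambda^2 G_\lambda$ term treated as a small, non-autonomous perturbation. Concretely, I introduce $\tau = -\log(T-t)$, $\rho = r/(T-t)$ and set $u(t,r) = (T-t)^{-1}[\tilde U(\rho) + \varphi(\tau,\rho)]$. Substituting into \eqref{semilinear skyrme eqn}, the strong-field part produces a first-order evolution system for $\Phi(\tau) := (\varphi(\tau,\cdot), (T-t)\partial_t\varphi(\tau,\cdot) + (1+\Lambda)\varphi(\tau,\cdot))$ of the form
\begin{equation*}
\partial_\tau \Phi = \mathbf L \Phi + \mathbf N(\Phi) + e^{-2\tau}\lambda^2\, \mathbf G_\lambda(\tau,\Phi),
\end{equation*}
where $\mathbf L$ is the linearization about $\tilde U$ studied in \cite{CMS23}, $\mathbf N$ is the strong-field nonlinearity (at least quadratic in $\Phi$), and the last term collects all contributions of $\lambda^2 G_\lambda$, which acquire the prefactor $e^{-2\tau}$ coming from $r = e^{-\tau}\rho$ and the rescaling of $1/(T-t)$.

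Next I import from \cite{CMS23} the semigroup generated by $\mathbf L$ on a radial Sobolev space $\mathcal H := H^5_{\mathrm{rad}}(\mathbb B^7) \times H^4_{\mathrm{rad}}(\mathbb B^7)$: the operator $\mathbf L$ generates a $C_0$-semigroup $\mathbf S(\tau)$ which, after removing the one-dimensional unstable subspace coming from the time-translation symmetry via a rank-one spectral projection $\mathbf P$, obeys a bound of the form $\|\mathbf S(\tau)(\mathbf I - \mathbf P)\| \lesssim e^{-\omega \tau}$ for some $\omega > 0$. The relevant symmetry-mode eigenfunction and its adjoint (yielding $\mathbf P$) are the same as in the strong-field problem, since both terms in the evolution share the same linearization at $\tilde U$. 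Locally Lipschitz estimates on $\mathbf N$ follow from Moser-type composition bounds in $H^5(\mathbb B^7)$, using that $H^5(\mathbb B^7) \hookrightarrow L^\infty$ ensures $ru$ stays in a regime where the singularities in $f_{SF}$ (values where $\sin(ru) = 0$) are avoided and $\cot(ru)$ factors are analytic. The same Moser bounds give Lipschitz control of $\mathbf G_\lambda$ together with the uniform bound $\|g_\lambda\|_{L^\infty} \leq \lambda^{-2}$, so that after absorbing the $\lambda^2$ prefactor one is left with an $\mathcal H$-bounded forcing of size $O(e^{-2\tau})$.

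With these ingredients the Cauchy problem is recast as the Duhamel fixed-point equation
\begin{equation*}
\Phi(\tau) = \mathbf S(\tau)\Phi_0(T) + \int_0^\tau \mathbf S(\tau - \sigma)\bigl[\mathbf N(\Phi(\sigma)) + e^{-2\sigma}\lambda^2 \mathbf G_\lambda(\sigma,\Phi(\sigma))\bigr]\, d\sigma,
\end{equation*}
where $\Phi_0(T)$ encodes the initial data \eqref{initial data} as a function of the blowup-time parameter $T$; note that $\|\Phi_0(T)\|_{\mathcal H} \lesssim \delta/C + |T-1|$. I work in the Banach space $\mathcal X$ of continuous $\mathcal H$-valued functions on $[0,\infty)$ with norm $\sup_{\tau \geq 0} e^{\omega \tau}\|\Phi(\tau)\|_{\mathcal H}$ and, following the standard Lyapunov--Perron strategy, subtract from the Duhamel right-hand side a correction term $e^{\tau}\mathbf P\,\mathbf C(T,\Phi)$ designed to kill the projection of the tail integral onto the unstable mode. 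Choosing $\delta$ sufficiently small and $\lambda \in (0,\delta]$, a standard contraction argument yields a unique solution $\Phi_T \in \mathcal X$ of the modified equation for each admissible $T$, depending Lipschitz continuously on the data and on $\lambda$. Finally, I show that the map $T \mapsto \mathbf C(T,\Phi_T)$ vanishes at a unique $T \in [1-\delta,1+\delta]$: existence and uniqueness follow from an intermediate value / Brouwer degree argument combined with the transversality of the time-translation mode, analogous to \cite{CMS23}. At that $T$, $\Phi_T$ solves the unmodified Duhamel equation, which unwinds back through the similarity change of variables to give the decomposition \eqref{rescaled decomposition} with the decay \eqref{convergence}.

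The main obstacle I anticipate is verifying that the perturbation $\mathbf G_\lambda$ genuinely fits this framework uniformly in $\lambda \in (0,\delta]$: the coefficient $g_\lambda$ depends singularly on $\lambda$, and although the $\lambda^2$ prefactor cancels the worst $\lambda^{-2}$ growth, one still has to extract the $e^{-2\tau}$ smallness from the $r$-dependent parts of $f_{WM} - f_{SF}$ (which are only Lipschitz, not smoothing, in $\Phi$) and propagate it through the high-regularity $H^5 \times H^4$ norm without losing the Moser structure that keeps the strong-field nonlinearity tame. Handling this, together with the Lipschitz dependence on both $(f,g)$ and $\lambda$ needed for the $C^2$ regularity statement and the Lipschitz blowup-time map, will be the most delicate part of the argument.
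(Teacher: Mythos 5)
Your overall architecture matches the paper's: similarity variables, first-order reformulation around $(U_1,U_2)$, the semigroup and spectral projection from \cite{CMS23} on $\mathcal H=H^5_{\mathrm{rad}}(\mathbb B^7)\times H^4_{\mathrm{rad}}(\mathbb B^7)$, a Lyapunov--Perron-modified Duhamel equation, a contraction in the exponentially weighted space, and then a separate argument fixing $T$. However, there is a genuine flaw in the one step that is new relative to \cite{CMS23}, namely the treatment of the perturbation $\lambda^2 G_\lambda$. You propose to control it via $\|g_\lambda\|_{L^\infty}\leq\lambda^{-2}$ and to "absorb the $\lambda^2$ prefactor," leaving a forcing of size $O(e^{-2\tau})$ with an $O(1)$ constant. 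This destroys exactly the smallness the scheme needs: the forcing acts from $\tau=0$, so an $O(1)\cdot e^{-2\tau}$ term pushes the solution out of the ball $\mathcal X_\delta$ on which the local Lipschitz bound for $\mathbf N$ holds (and on which the composition $\mathcal G$ is even well defined, since one needs $0<\rho\psi_1<\pi$), it prevents the correction term from being $O(\delta)$ so the blowup time cannot be pinned to $[1-\delta,1+\delta]$, and it ruins the final bound \eqref{convergence} with constant $\delta$. The correct mechanism, which is what the paper's Proposition \ref{lipschitz nonlinearity estimate} establishes, is that near the profile the strong-field coefficient $4\sin^2(\rho\psi_1)/\rho^2$ is bounded \emph{below} on $\overline{\mathbb B^7}$ (the profile satisfies $0<\rho U_1(\rho)<\pi$ away from the origin and the quotient has a positive limit at $\rho=0$), so $g_{\lambda,T}$ is bounded \emph{uniformly} in $\lambda$, $T$, and $\tau$; consequently the full factor $\lambda^2T^2e^{-2\tau}$ survives as smallness, and with $\lambda\leq\delta$ (this is precisely the role of the hypothesis $\lambda\in(0,\delta]$) the forcing is $O(\delta^2 e^{-2\tau})$. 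Propagating this through $H^k$ requires Taylor-expanding $\mathcal G$ in the field variables and handling the zeroth- and first-order coefficients explicitly, with Moser/composition estimates (after cutoff extension) only for the quadratic remainder; your blanket "Moser bounds" gloss hides the fact that $\mathbf G_{\lambda,T}$ is not locally Lipschitz from zero because of these zeroth- and first-order terms.

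Two smaller points. First, for the determination of $T$ you invoke an intermediate value/Brouwer degree argument; the theorem also asserts \emph{uniqueness} of $T$ and Lipschitz dependence of $T$ (hence of the solution) on $(f,g)$ and $\lambda$, which the paper obtains by running a contraction for $\beta=T-1$ (Lemma \ref{correction is zero}); a pure degree argument would need to be supplemented by a quantitative contraction estimate anyway. Second, the passage from the strong solution in $\mathcal H$ to a genuine $C^2_{\mathrm{rad}}(\mathfrak C_T)$ solution of Equation \eqref{semilinear skyrme eqn} is not addressed: this is where the extra regularity of the data, $(f,g)\in H^6\times H^5$, is used, via local well-posedness in $\mathcal H^6$, a Gr\"onwall argument to globalize, semigroup regularity theory, and Sobolev embedding, as in the paper's Proposition \ref{classical soln}.
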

			By undoing the scaling transformation \eqref{rescaling transformation}, we obtain stable blowup for Equation \eqref{skyrme eqn} with $\alpha=\beta=1$ and $d=5$.
				
				\begin{proof}[Proof of Theorem \ref{main result} $\implies$ Theorem \ref{actual main result}]
					Set 
						$$
							f(r)=r^{-1}\psi_0(\lambda r),\quad g(r)=\lambda r^{-1}\psi_1(\lambda r),\quad r\in(0,2].
						$$
					Note that this pointwise definition is well-defined via Sobolev embedding. By rescaling in Sobolev spaces,
						$$
							\|(f,g)\|_{H^6(\mathbb B_{2}^7)\times H^5(\mathbb B_{2}^7)}\leq\lambda^{-5/2}\||\cdot|^{-1}(\psi_0,\psi_1)\|_{H^6(\mathbb B_{2\delta}^7)\times H^5(\mathbb B_{2\delta}^7)}\leq\frac{\d}{C}.
						$$
					As a consequence, $(f,g)\in H^6(\mathbb B_{2}^7)\times H^5(\mathbb B_{2}^7)$ satisfies the hypotheses of Theorem \ref{main result}. Thus, there exists a unique $T\in[1-\d,1+\d]$ and a unique $u\in C^2_\text{rad}(\mathfrak C_T)$ solving Equation \eqref{semilinear skyrme eqn} with initial data as in \eqref{initial data} and satisfying the decomposition \eqref{rescaled decomposition}. According to the calculations carried out in Section \ref{Reformulation as a semilinear wave equation}, we have that
						$$
							\psi(t,r)=\lambda^{-1}ru(\lambda^{-1}t,\lambda^{-1}r),
						$$
					is a solution of Equation \eqref{skyrme eqn} in $\mathcal C_{\lambda T}$. In particular, for $r\in(0,2\delta]$,
						$$
							\psi(0,r)=\lambda^{-1}ru(0,\lambda^{-1}r)=\lambda^{-1}r\tilde U(\lambda^{-1}r)+\lambda^{-1}rf(\lambda^{-1}r)=U(\lambda^{-1}r)+\psi_0(r)
						$$
					and
						$$
							\partial_t\psi(0,r)=\lambda^{-1}r\partial_0u(0,\lambda^{-1}r)=\lambda^{-1}\Big(\lambda^{-1}r(1+\Lambda)\tilde U(\lambda^{-1}r)+\lambda^{-1}rg(\lambda^{-1}r)\Big)=\lambda^{-1}\Lambda U(\lambda^{-1}r)+\psi_1(r)
						$$
					along with the decomposition
						\begin{align*}
							\psi(t,r)&=\frac{\lambda^{-1}r}{T-\lambda^{-1}t} \left [\tilde U \bigg (\frac{\lambda^{-1}r}{T-\lambda^{-1}t} \bigg ) + \varphi \bigg (\lambda^{-1}t,\frac{\lambda^{-1}r}{T-\lambda^{-1}t}  \bigg) \right ]
							\\
							&=U \bigg (\frac{r}{\lambda T-t} \bigg ) +\phi\bigg (t,\frac{r}{\lambda T-t}  \bigg)
						\end{align*}
					where $\phi(t,\rho)=\rho\varphi(\lambda^{-1}t,\rho)$. Lipschitz dependence and decay follow from that of $u$.
				\end{proof}
		
	\subsection{Discussion of related results and history of the Skyrme model}\label{Discussion of related results and history of the Skyrme model}
		We briefly review some results concerning the global dynamics of the Skyrme model. 
		
		\subsubsection{Results outside of co-rotational symmetry}
			In \cite{W11}, Wong provided a general framework for studying the hyperbolicity, and thus local well-posedness, of Lagrangian field theories. In particular, Wong proved local well-posedness for the dynamical problem of the 3D Skyrme model outside of co-rotational symmetry for almost stationary initial data. On the other hand, Wong also showed that the Skyrme model can suffer a breakdown of hyperbolicity, and thus ill-posedness of the Cauchy problem, corresponding to a change of signature of its principal symbol nearby certain initial data sets. No results for the Skyrme model outside of co-rotational symmetry in other dimensions appear to exist.
		
		\subsubsection{Results inside of co-rotational symmetry for the 3D Skyrme model}
			For small data in critical Besov-type spaces, global existence and scattering was first proven by Geba, Nakanishi, and Rajeev \cite{GNR11}. Soon after\footnote{Though published in 2021, this result first appeared as a preprint in 2012.}, Li \cite{L21} proved global regularity for arbitrary, large radial initial data in $H^k(\mathbb R^5)\times H^{k-1}(\mathbb R^5)$ for integer $k\geq4$. Li's approach to large data global regularity was then extended by Geba and Grillakis \cite{GG18} to data $H^s(\mathbb R^5)\times H^{s-1}(\mathbb R^5)$ for non-integer $s>\frac{7}{2}$. 
			
			The existence of the Skyrmion was proven by Kapitanskii and Ladyzhenskaya \cite{KL83} via variational techniques. Another proof was obtained by Mcleod and Troy \cite{MT91} using ODE methods. For a comprehensive overview of topological solitons, we refer the reader to the book by Manton and Sutcliffe \cite{MS04}. The nonlinear asymptotic stability of the Skyrmion was investigated numerically by Bizo\'n, Chmaj, and Rostworowski \cite{BCR07} and a rigorous proof of its linear stability was proved by Creek, Donninger, Schlag, and Snelson \cite{CDSS16}. A proof of the Skyrmion's nonlinear asymptotic stability remains open.
			
		\subsubsection{Co-rotational results for other dimensions}
			Geba and Da Silva \cite{GDs13} showed that energy does not concentrate for the $d=2$ Skyrme model. Soon after, Creek \cite{C13} proved large data global regularity for Equation \eqref{skyrme eqn} with $d=2$ is globally well-posed for large data in $H^s(\mathbb R^5)\times H^{s-1}(\mathbb R^5)$ for $s\geq4$. Around the same time, Geba, Nakanishi, and Zhang \cite{GNZ15} showed that small data in critical Besov spaces evolve into global solutions and scatter. Creek's result was then extended to large data in $H^s(\mathbb R^5)\times H^{s-1}(\mathbb R^5)$ for $s>3$ by Geba and Grillakis \cite{GG17}.
			
			Despite the large data global regularity results in dimensions $2$ and $3$, the present work shows that there is a transition to the breakdown of regularity for $d=5$ even within co-rotational symmetry. It is currently unclear if this happens earlier with $d=4$. Moreover, the present work shows that Skyrme's correction term provides a mechanism by which singularities can form for the full model. This is due to the fact that the strong field limit of the Skyrme model already exhibits stable self-similar blowup for $d=5$ and, in a precise sense used presently, the wave maps portion of the equation is negligible nearby this self-similar solution. The existence of self-similar solutions was first proven by the author in \cite{M20} using variational arguments al\'a Shatah \cite{S88}. Chen, Sch\"orkhuber, and the author \cite{CMS23} found what is believed to be this solution in closed form, namely \eqref{sf skyrme soln}. Moreover, \cite{CMS23} shows that this family of solutions is asymptotically stable within backwards light cones.
			
	\subsection{Outline of the proof}\label{Outline of the proof}
		Our argument is perturbative in the sense that we seek a solution of Equation \eqref{semilinear skyrme eqn} of the form $u=u^T+v$ for some $T>0$ and $v$ and then analyze the equation that $v$ must solve. Since $u^T$ solves 
			$$
				\partial_t^2u^T-\partial_r^2u^T-\frac{6}{r}\partial_ru^T-f_{SF}\big(ru^T,r\partial_ru^T,r\partial_tu^T,r\big)=0,
			$$
		we obtain an equation for $v$ of the form
			\begin{align}
			\begin{split}
				\partial_t^2v-&\partial_r^2v-\frac{6}{r}\partial_rv-\tilde f_{SF}\big(rv,r\partial_rv,r\partial_tv,r\big)
				\\
				&-\lambda^2G_\lambda\big(r(u^T+v),r\partial_r(u^T+v),r\partial_t(u^T+v),r\big)=0 \label{expanded rescaled skyrme eqn}
			\end{split}
			\end{align}
		where $\tilde f_{SF}$ denotes the linear and nonlinear terms in the expansion of $f_{SF}$ around $u^T$. However, since $u^T$ is not an exact solution of Equation \eqref{semilinear skyrme eqn}, we obtain zeroth-order and linear (in $v$) terms in the second line of \eqref{expanded rescaled skyrme eqn}. More precisely, Taylor expansion yields
			\begin{align*}
				G_\lambda&\big(r(u^T+v),r\partial_r(u^T+v),r\partial_t(u^T+v),r\big)
				\\
				&=\underbrace{G_\lambda\big(ru^T,r\partial_ru^T,r\partial_tu^T,r\big)}_{\text{zeroth-order}}+\underbrace{DG_\lambda\big(ru^T,r\partial_ru^T,r\partial_tu^T,r\big)\cdot\big(rv,r\partial_rv,r\partial_tv\big)}_{\text{linear}}+\;h.o.t..
			\end{align*}
		Without additional smallness of these terms, a perturbative approach is doomed to fail.
		
		This issue can be remedied \textit{precisely by the lack of scale-invariance} of the Skyrme model. Namely, the smallness of the scale $\lambda$ paired with the precise difference in scaling of the wave maps and strong field terms forces smallness of the zeroth-order and linear terms up to the time of blowup. The latter feature can be made easily visible by transforming to similarity coordinates defined by the equations
			$$ 
				\tau=\log\Big(\frac{T}{T-t}\Big),\quad\rho=\frac{r}{T-t}.
			$$
		Upon transforming Equation \eqref{expanded rescaled skyrme eqn}, we see that the problematic term has the following schematic expansion
			$$
				\lambda^2G_\lambda\big(r(u^T+v),r\partial_r(u^T+v),r\partial_t(u^T+v),r\big)|_{(t,r)=\big(t(\tau,\rho),r(\tau,\rho)\big)}\sim\lambda^2T^2e^{-2\t}(1+v+\partial v+h.o.t.).
			$$
		Here, $T$ should be thought of as size $1$. Thus, the initial size of the zeroth-order and linear terms can be made small so long as $\lambda\ll1$ with their size at later times becoming even smaller due to the decaying exponential factor. This decaying exponential is present precisely due to the difference in scaling of the wave maps and strong field terms; the wave maps terms scale like $\lambda^{-2}$ while the strong field terms scale like $\lambda^{-4}$ upon rescaling coordinates as in \eqref{rescaling transformation}. 
		
		These observations suggest the following strategy. We evolve initial data of the form
			$$
				u(0,r)=\tilde U(r)+f(r),\quad\partial_tu(0,r)=(1+\Lambda)\tilde U(r)+g(r)
			$$
		according to Equation \eqref{semilinear skyrme eqn} for $(f,g)$ sufficiently small and $\lambda>0$ sufficiently small and seek a solution of the form $u=u^T+v$ for some $T$ to be determined. Note that the initial data corresponds to a small perturbation of the data $\big(u^1(0,r),\partial_tu^1(0,r)\big)$. We then show that $T$ can be chosen uniquely and close to $1$ and that $v$ can be uniquely solved for and, in a precise sense, blows up slower than $u^T$ within $\mathcal C_T$. 
		
		We achieve this by using the standard approach of transforming to a first-order system in similarity coordinates. Namely, the evolution of perturbations of $u^T$ is governed by an operator equation of the form
				\begin{equation}
					\partial_\t\Phi(\tau)=(\mathbf L_0+\mathbf L')\Phi(\tau)+\mathbf N(\Phi(\tau))+\mathbf{G}_{\lambda,T}\big(\tau,\Phi(\tau)\big) \label{outline: operator skyrme eqn}
				\end{equation}
			for $\Phi(\tau) = (v_1,v_2)$ where $v_1$ and $v_2$ are suitable rescalings of $v$ and $\partial_tv$ expressed in similarity coordinates. The equation obtained by dropping $\mathbf{G}_{\lambda,T}$, 
				$$
					\partial_\t \Phi(\tau)= (\mathbf L_0+\mathbf L')\Phi(\tau) +\mathbf N(\Phi(\tau)),
				$$
			is exactly the strong field equation. Moreover, $\mathbf L_0=\mathbf L_W+\mb L_D$, where $\mb L_W$ is the generator of free waves in similarity coordinates and $\mb L_D$ translates into a scale-invariant damping term in physical coordinates. The term $\mathbf L'$ is a bounded, relatively compact perturbation of $\mathbf L_0$ and derives from the linear terms obtained by expanding $f_{SF}$ around $u^T$. The operator $\mb N$ is the remaining nonlinearity obtained by expanding $f_{SF}$ around $u^T$.
			
			To solve Equation \eqref{outline: operator skyrme eqn}, we recall the linear theory developed in \cite{CMS23}. In particular, it states that $\mb L = \mb L_0 + \mb L'$ is the generator of a strongly continuous semigroup of bounded operators $\big(\mathbf S(\tau)\big)_{\tau\geq0}$. We then seek the existence of a unique, global strong solution of Equation \eqref{outline: operator skyrme eqn} which decays exponentially. That is, we seek a unique global and exponentially decaying solution of the integral equation
				\begin{equation}
					\Phi(\tau)=\mathbf S(\tau)\Phi(0)+\int_0^\tau\mathbf S(\tau-s)\Big(\mathbf N\big(\Phi(s)\big)+\mathbf{G}_{\lambda,T}\big(s,\Phi(s)\big)\Big)ds. \label{fpp}
				\end{equation}
			
			The success of this approach requires sufficient decay of the semigroup. We recall from \cite{CMS23} that the spectrum of $\mathbf L_0$ is contained in the set $\{\lambda\in\mathbb C:\Re\lambda\leq-\tfrac{1}{2}\}$. Combining the boundedness and relative compactness of $\mathbf L'$, the analytic Fredholm theorem guarantees that for any $\varepsilon>0$, the spectrum of $\mathbf L$ possesses a discrete set of eigenvalues, all of which have finite algebraic multiplicity, in the set $\{\lambda\in\mathbb C:\Re\lambda\geq-\tfrac{1}{2}+\varepsilon\}$. Note that, in contrast, a compact perturbation only introduces a finite set of such eigenvalues in that set. Thus, there may be infinitely many unstable eigenvalues, i.e. those with $\Re\lambda\geq0$, obstructing decay of the semigroup. 
			
			We utilize the detailed spectral analysis of $\mathbf L$ from \cite{CMS23} to circumvent this issue. In particular, it is shown that $1$ is an eigenvalue with unit algebraic and geometric multiplicity. The existence of this unstable eigenvalue is guaranteed by the time translation symmetry of Equation \eqref{sf skyrme eqn} as discussed in Remark \ref{On the time of blowup}. Moreover, it is also shown that any other spectrum is contained in $\{\lambda\in\mathbb C:\Re\lambda<0\}$. This is highly nontrivial and requires the resolution of the corresponding `mode stability problem' (see \cite{D23} for a detailed history of these problems). 
			
			This is still insufficient to obtain decay of the semigroup away from the eigenspace of $1$ since there may be infinitely many eigenvalues in the strip $\{\lambda\in\mathbb C:-\tfrac{1}{2}+\varepsilon\leq\Re\lambda<0\}$. Remarkably, \cite{CMS23} shows that there are only \textit{finitely many eigenvalues} in this strip by using the precise structure of $\mathbf L'$. This is achieved by using the fact that the problematic terms in $\mathbf L'$ causing it to be compact \textit{relative} to $\mathbf L_0$, and not just simply \textit{compact}, are derived from the nonlinear term $\cot(\psi)\big((\partial_t\psi)^2-(\partial_r\psi)^2\big)$. As a consequence, \cite{CMS23} shows that $\mathbf L_0+\mathbf L'$ conjugates to a compact perturbation of $\mathbf L_0$ whose spectrum can be characterized sufficiently. More precisely, \cite{CMS23} shows that there exists a bounded invertible operator $\mathbf\Gamma$ and compact operator $\mathbf V$ such that 
				$$
					\mathbf\Gamma(\mathbf L_0+\mathbf L')\mathbf\Gamma^{-1}=\mathbf L_0+\mathbf V
				$$
			and that the spectrum of $\mathbf L_0+\mathbf V$ is contained in $\{\lambda\in\mathbb C:\Re\lambda\leq-\tfrac{1}{2}+\varepsilon\}\cup S_\varepsilon\cup\{1\}$ with $\varepsilon<\frac{1}{2}$ and $S_\varepsilon$ being a finite set of eigenvalues whose real parts are all negative. As a consequence, decay of the semigroup away from the eigenspace of $1$ can be obtained via resolvent estimates. For precise details, we refer the reader to Section \ref{Operator and Semigroup Theory} and Section 3 of \cite{CMS23}. 
			
			A second and necessary ingredient for carrying out our approach to solving Equation \eqref{outline: operator skyrme eqn} is a local Lipschitz estimate on $\mathbf N$ and a uniform (in $\tau$) Lipschitz estimate on $\mathbf{G}_{\lambda,T}$. The former is proved in \cite{CMS23} and the latter is proved in Section \ref{Existence of strong solutions}. These estimates allow us to solve Equation \eqref{fpp} as a fixed-point problem on an appropriately chosen Hilbert space. In particular, $\mathbf{G}_{\lambda,T}$ is not locally Lipschitz precisely due to the presence of linear terms. As described previously, this is remedied by the smallness of $\lambda$. After noting this, we are able to employ a contraction mapping argument as in \cite{CMS23} to obtain the desired decaying solution. The main result is then proved in Section \ref{Proof of the main result} and is obtained by undoing the transformation to a first-order system in similarity coordinates.
				
\subsection{Notation and conventions}\label{Notation and conventions}
Given $R>0$ and $n\in\mathbb N$, we denote by $\mathbb B_R^n:=\{x\in\mathbb R^n:|x|<R\}$ the open ball in $\mathbb R^n$ of radius $R$ centered at the origin. When $R=1$, we drop the subscript and simply write $\mathbb B^n$. By $\mathbb H$ we denote the open right-half plane in $\mathbb C$, i.e., $\mathbb H:=\{z\in\mathbb C:\Re z>0\}$. On a Hilbert space $\mathcal H$, we denote by $\mathcal B(\mathcal H)$ the space of bounded linear operators. For a closed operator $L$ on a Hilbert space $\mathcal H$ with domain $\mathcal D(L)$, we denote its resolvent set by $\rho(L)$ and by $R_L(\lambda):=(\lambda I- L)^{-1}$ the resolvent operator for $\lambda\in\rho( L)$. Furthermore, we denote by $\sigma( L):=\mathbb C\setminus\rho( L)$ the spectrum of $L$ and by $\sigma_p( L)$ its point spectrum. As we will only work with strongly continuous semigroups $\big(S(\tau)\big)_{\tau\geq0}$ of bounded operators on $\mathcal H$, we will instead refer to these more simply as semigroups on $\mathcal H$ whenever necessary. Given $x,y\geq0$, we say $x\lesssim y$ if there exists a constant $C>0$ such that $x\leq Cy$. Furthermore, we say that $x\simeq y$ if $x\lesssim y$ and $y\lesssim x$. If the constant $C$ depends on a parameter, say $k$, we will write $x\lesssim_ky$ when it is important to note the dependence on this parameter. 

\subsubsection{Function spaces}\label{Function Spaces}
For $R>0$, let
\[ C^\infty_\text{rad}(\overline{\mathbb B_R^7})=\{u\in C^\infty(\overline{\mathbb B_R^7}):u\text{ is radial}\}. \]
For $k\in\N$, we define the radial Sobolev space $H_\text{rad}^k(\mathbb B_R^7)$ as the completion of $C^\infty_\text{rad}(\overline{\mathbb B_R^7})$ under the standard Sobolev norm
\[ \|u\|_{H^k(\mathbb B_R^7)}^2:=\sum_{|\alpha|\leq k}\|\partial^\alpha u\|_{L^2(\mathbb B_R^7)}^2 \]
with $\alpha\in\mathbb N_0^7$ denoting a multi-index with $\partial^\alpha u=\partial_1^{\alpha_1}\dots\partial_d^{\alpha_d}u$ and $\partial_iu(x)=\partial_{x^i}u(x)$. Similarly, we denote by $H^k(\mathbb R^7)$ the completion of the Schwartz space $\mathcal S(\mathbb R^7)$. Furthermore, we define
			$$
				\mc H^k := H_{\text{rad}}^k(\mathbb B^7)\times H_{\text{rad}}^{k-1}(\mathbb B^7)
			$$
	which comes equipped with the norm 
			$$
				\|\mathbf u\|_{\mathcal H^k}^2:=\|u_1\|_{H^k(\mathbb B^7)}^2+\|u_2\|_{H^{{k-1}}(\mathbb B^7)}^2,\quad\mathbf u=(u_1,u_2)
			$$
	and the dense subset $C^\infty_\text{rad}(\overline{\mathbb B^7}) \times C^\infty_\text{rad}(\overline{\mathbb B^7})$. Central to our analysis is the space $\mc H^5$ which we will more simply denote as $\mc H$.

In many places it will be convenient to work with radial representatives of functions in $C^\infty_\text{rad}(\overline{\mathbb B_R^7})$. That is, for any function $u\in C^\infty_\text{rad}(\overline{\mathbb B_R^7})$, there is a function $\hat u:[0,R]\to\mathbb C$ such that $u(x)=\hat u(|x|)$ for all $x\in\overline{\mathbb B^7}$. In fact, by Lemma 2.1 of \cite{G22a}, we have $\hat u\in C_e^\infty[0,R]$ where $C_e^\infty[0,R]$ denotes the set of functions
\[
				C_e^\infty[0,R]:=\{u\in C^\infty[0,R]:u^{(2k+1)}(0)=0,\;k\in\N_0\}. \]	
		
\section{First-order formulation}\label{First-order formulation}
	In this section, we convert the equation
		\begin{equation}
			\partial_t^2u-\partial_r^2u-\frac{6}{r}\partial_ru-f_{SF}(ru,r\partial_ru,r\partial_tu,r)-\lambda^2G_\lambda\big(r u,r\partial_r u,r\partial_t u,r\big)=0 \label{main equation}
		\end{equation}
	with initial data
		\begin{equation}
			u(0,r)=\tilde U(r)+f(r),\;\partial_tu(0,r)=(1+\Lambda)\tilde U(r)+g(r), \label{initial data_}
		\end{equation}
	into a suitable first-order system in similarity coordinates. 

	Similarity coordinates are defined via the equation
		$$
			(\t,\r):=\bigg(\log\Big(\tfrac{T}{T-t}\Big),\frac{r}{T-t}\bigg),\quad T>0.
		$$
	Restricting to the backward light cone $\mc C_T$ implies that $\rho \in [0,1]$ and $\tau \in [0,\infty)$.
We furthermore introduce rescaled dependent variables, $\psi_1$ and $\psi_2$, as follows
		\begin{equation}
			\psi_1(\t,\r) =(T-t)u(t,r)|_{(t,r)=(t(\t,\r),r(\t,\r))},\;\psi_2(\t,\r)=(T-t)^2\partial_tu(t,r)|_{(t,r)=(t(\t,\r),r(\t,\r))}. \label{rescaling}
		\end{equation}
	Observe that this rescaling is natural from the point of view that the anticipated leading-order term, namely $u^T(t,r)$, is blowing up with rate $(T-t)^{-1}$. In this sense, we are tracking the evolution of the data \eqref{initial data_} relative to the blowup rate of $u^T$. 
	
	For a solution $u$ of Equation \eqref{main equation}, a direct calculation shows that $\psi_1$ and $\psi_2$ as defined in \eqref{rescaling} solve
		\begin{align*}
		\begin{pmatrix}
			\partial_\tau\psi_1
			\\
			\partial_\tau\psi_2
		\end{pmatrix}
		=&
		\begin{pmatrix}
			-\r\partial_\r-1&1
			\\
			\Delta_\text{rad}&-\r\partial_\r-2
		\end{pmatrix}
		\begin{pmatrix}
			\psi_1
			\\
			\psi_2
		\end{pmatrix}
		+
		\begin{pmatrix}
			0
			\\
			f_{SF}\big(\r\psi_1,\r\partial_\r\psi_1,\r\psi_2,\r\big)
		\end{pmatrix}
		\\
		&+
		\begin{pmatrix}
			0
			\\
			\lambda^2G_\lambda(\r\psi_1,\r\partial_\r\psi_1,\r\psi_2,\rho)
		\end{pmatrix}
		\end{align*}
	where $\Delta_\text{rad}=\partial_\r^2+\frac{6}{\r}\partial_\r$ denotes the seven-dimensional radial Laplacian. The self-similar solution of the strong field equation transforms as 
		$$
			\begin{pmatrix}
				(T-t)u^T(t,r)
				\\
				(T-t)^2 \partial_t u^T(t,r)
			\end{pmatrix}\Bigg|_{(t=t(\tau,\rho),r=r(\tau,\rho))}
			=
			\begin{pmatrix}
				\tilde U(\r)
				\\
				(1+\Lambda)\tilde U(\r)
			\end{pmatrix}
			=:
			\begin{pmatrix} 
				U_1(\rho) 
				\\ 
				U_2(\rho) 
			\end{pmatrix}.
		$$
		
	Inserting the ansatz 
		$$		
			\begin{pmatrix}
				\psi_1(\tau,\rho)
				\\
				\psi_2(\tau,\rho)
			\end{pmatrix} =	
			\begin{pmatrix}
				U_1(\rho)
				\\
				U_2(\rho)
			\end{pmatrix} 	
			+ 
			\begin{pmatrix}
				\varphi_1(\tau,\rho)
				\\
				\varphi_2(\tau,\rho)
			\end{pmatrix}
		$$ 
	yields 
		\begin{align}
		\begin{split}
			\begin{pmatrix}
				\partial_\tau\varphi_1
				\\
				\partial_\tau\varphi_2
			\end{pmatrix}
			=&
			\begin{pmatrix}
				-\r\partial_\r-1&1
				\\
				\Delta_\text{rad}&-\r\partial_\r-4
			\end{pmatrix}
			\begin{pmatrix}
				\varphi_1
				\\
				\varphi_2
			\end{pmatrix}
			+
			\begin{pmatrix}
				0
				\\
				V_1(\r)\varphi_1+ V_2(\rho)\rho\big(\rho \varphi_2-\partial_\r\varphi_1\big)
			\end{pmatrix}
			\\
			&+
			\begin{pmatrix}
				0
				\\
				N(\r\varphi_1,\r\partial_\r\varphi_1,\r\varphi_2,\r)
			\end{pmatrix} 
			+
			\begin{pmatrix}
				0
				\\
				\lambda^2T^2e^{-2\tau}\mathcal G_{\lambda,T}(\r\varphi_1,\r\partial_\r\varphi_1,\r\varphi_2,\t,\r)
			\end{pmatrix}\label{first order eqn}
		\end{split}
		\end{align}
	where
			\begin{align}
				V_1(\r) & =\r\partial_2 f_{SF}(\r,\r U_1,\r \partial_\rho U_1 ,\r U_2)=-\frac{5\big(21\rho^6-375\rho^4+1455\rho^2-2125\big)}{\big(5+3\rho^2\big)^2\big(5-\rho^2\big)^2}, \label{Def_V1}
			\end{align}
			\begin{align}
 				V_2(\rho):=-\frac{2\big(3\rho^2-35\big)}{\big(5+3\rho^2\big)\big(5-\rho^2\big)}, \label{V2_Def}
			\end{align}
			\begin{align*}
				N(\r\varphi_1,\r\partial_\r\varphi_1,\r\varphi_2,\r)  =& 
f_{SF}(\r,\r U_1+\r\varphi_1,\r \partial_\rho U_1  +\r\partial_\r\varphi_1,\r U_2+\r\varphi_2)\\
& -f_{SF}(\r,\r U_1,\r \partial_\rho U_1 ,\r U_2) -V_1(\rho) \varphi_1+\rho V_2(\r) \partial_\r\varphi_1 -\big(\rho^2 V_2(\r)-2\big)\varphi_2, 
			\end{align*}
		and
			$$
				\mathcal G_{\lambda,T}(\r\varphi_1,\r\partial_\r\varphi_1,\r\varphi_2,\tau,\r)=g_{\lambda,T}(\r U_1+\r\varphi_1,\tau,\rho)G(\r U_1+\r\varphi_1,\r U_1'+\r\partial_\r\varphi_1,\r U_2+\r\varphi_2,\r)
			$$
		with
			$$
				g_{\lambda,T}(\rho\psi_1,\tau,\rho)=\bigg(\lambda^2T^2e^{-2\t}+\frac{4\sin^2(\r\psi_1)}{\r^2}\bigg)^{-1}. 
			$$
		Regarding the precise form of the linear portion of Equation \eqref{first order eqn}, we refer the reader to Section \ref{Operator and Semigroup Theory} and to \cite{CMS23}. Furthermore, the initial data becomes
			\begin{align}\label{Initial_Data_Trans}
				\begin{pmatrix}
					\varphi_1(0,\rho)
					\\
		  			\varphi_2(0,\rho)
				\end{pmatrix} 
				= 
				\begin{pmatrix}
					Tu_1(T\rho)+TU_1(T\rho)-U_1(\rho)
					\\
	   				T^2u_2(T\rho)+T^2U_2(T\rho)-U_2(\rho)
				\end{pmatrix} 
			\end{align}
		In what follows, the main object of study is Equation \eqref{first order eqn} along with the initial data \eqref{Initial_Data_Trans}.
	
	\subsection{Formulation as an Abstract Initial Value Problem}
		In this section, we reformulate Equations \eqref{first order eqn} and \eqref{Initial_Data_Trans} as an abstract initial value problem on $\mathcal H$. We first define several linear and nonlinear operators on $\mathcal H$. Using the semigroup theory developed in \cite{CMS23}, we are able to obtain a suitable weak formulation of Equations \eqref{first order eqn} and \eqref{Initial_Data_Trans}.
		
		\subsubsection{Review of operator and semigroup theory from \cite{CMS23}}\label{Operator and Semigroup Theory}
			Let $\mathbf u=(u_1,u_2)\in C^\infty_\text{rad}(\overline{\mathbb B^7}) \times C^\infty_\text{rad}(\overline{\mathbb B^7})$. We define
				$$
					\tilde{\mathbf{L}}_0\mathbf u(\xi):=
						\begin{pmatrix}
						-\xi^j \partial_j-1&1
						\\
						\Delta &- \xi^j \partial_j-4
					\end{pmatrix}
					\begin{pmatrix}
						u_1(\xi)
						\\
						u_2(\xi)
					\end{pmatrix}			
				$$
			for $\xi\in\mathbb B^7$ where $\Delta=\partial^j\partial_j$ is the Laplacian on $\mathbb R^7$. With the domain $\mathcal D(\tilde{\mathbf L}_0):=C^\infty_\text{rad}(\overline{\mathbb B^7})\times C^\infty_\text{rad}(\overline{\mathbb B^7})$, the unbounded operator $\big(\tilde{\mathbf L}_0,\mathcal D(\tilde{\mathbf L}_0)\big)$ is densely-defined on $\mathcal H$. Writing $\tilde{\mathbf{L}}_0\mathbf u$ in terms of radial representatives gives exactly the first term on the right-hand side of Equation \eqref{first order eqn}. From \cite{CMS23}, we recall that $\big(\tilde{\mathbf L}_0,\mathcal D(\tilde{\mathbf L}_0)\big)$ is closable in $\mc H$ with closure denoted by $\big(\mathbf L_0,\mathcal D(\mathbf L_0)\big)$. Moreover, we define
				\begin{equation}
					\mathbf{L}'\mathbf{u}(\xi):=
						\begin{pmatrix}
						0
						\\
						V_1(|\xi|)u_1(\xi)+ V_2(|\xi|)\big(|\xi|^2u_2(\xi)-\xi^j\partial_j u_1(\xi)\big)
					\end{pmatrix}, \label{perturbation}
				\end{equation}
			with $V_1, V_2 \in C_e^\infty[0,1]$ defined in \eqref{Def_V1} and \eqref{V2_Def} respectively. Note that $\mathbf{L}'$ extends to a bounded operator on $\mathcal H$ which, by an abuse of notation, we again denote by $\mathbf L'$. From \cite{CMS23}, it follows that the operator $(\mathbf L, \mathcal D(\mathbf L))$, with $\mathbf L := \mathbf L_0+\mathbf L'$ and $\mathcal D(\mathbf L)=\mathcal D(\mathbf L_0) \subset \mc H$, is closed. Moreover, we have the following property regarding its spectrum
				\begin{proposition}[Section 3.3 of \cite{CMS23}] \label{spectrum of evolution}
					There is an $\omega_0>0$ such that 
						$$
							\sigma(\mb L)\subseteq\{\lambda\in\mathbb{C}:\Re\lambda \leq -\omega_0 \}\cup\{1\}.
						$$
					Furthermore, $1$ is an eigenvalue with a one-dimensional eigenspace, i.e., $\ker(1-\mb L)=\langle\mathbf g\rangle$ with $\mathbf g\in C^\infty_\text{rad}(\overline{\mathbb B^7}) \times C^\infty_\text{rad}(\overline{\mathbb B^7})\setminus\{\mathbf 0\}$. Furthermore, there exists a projection $\mathbf P$ which satisfies $\range\mathbf P=\langle\mathbf g\rangle$.
				\end{proposition}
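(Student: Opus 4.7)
The plan is to build up $\sigma(\mathbf L)$ in three stages, exploiting that $\mathbf L_0$ is controlled explicitly while $\mathbf L'$ is bounded but only \emph{relatively} compact on $\mathcal H^5$. First I would pin down $\sigma(\mathbf L_0)$. Since $\mathbf L_0=\mathbf L_W+\mathbf L_D$ is the generator of the standard waves-in-similarity-variables semigroup with an added scale-invariant damping, a direct resolvent computation (which after the substitution $(\rho\varphi_1,\rho\varphi_2)$ reduces to a radial Euler-type ODE system on $[0,1]$) or a Lumer--Phillips-type energy estimate in an equivalent norm on $\mathcal H^5$ yields $\sigma(\mathbf L_0)\subseteq\{\Re\lambda\leq-\tfrac12\}$ together with a polynomial-in-$\lambda$ resolvent bound on vertical strips.

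Next I would tame $\mathbf L'$. Looking at \eqref{perturbation}, the multiplication-by-$V_1$ piece and the $V_2|\xi|^2$ piece are $\mathbf L_0$-compact for free on $\mathcal H^5$; the problematic term is the first-order $V_2(|\xi|)\xi^j\partial_j u_1$, which costs as many derivatives as the principal part of $\mathbf L_0$ and is therefore relatively compact but not compact. Following the outline, I would construct an explicit bounded invertible $\mathbf\Gamma$ (morally a weighted multiplication whose commutator $[\mathbf\Gamma,\mathbf L_0]$ absorbs the offending transport term, reflecting the origin of that term in the $\cot(\psi)\bigl((\partial_t\psi)^2-(\partial_r\psi)^2\bigr)$ nonlinearity) so that $\mathbf\Gamma\mathbf L\mathbf\Gamma^{-1}=\mathbf L_0+\mathbf V$ with $\mathbf V$ genuinely compact. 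The analytic Fredholm theorem applied to $I-R_{\mathbf L_0}(\lambda)\mathbf V$ on $\{\Re\lambda>-\tfrac12+\varepsilon\}$ then produces at most a discrete set of eigenvalues of finite algebraic multiplicity in that half-plane, with only finitely many in any vertical strip.

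The heart of the proof, and the main obstacle, is the mode-stability claim: the only $\lambda$ in the closed right half-plane belonging to $\sigma_p(\mathbf L)$ is $\lambda=1$. Eliminating $u_2$ from $(\lambda-\mathbf L)\mathbf u=0$ reduces it to a second-order linear ODE for $u_1$ on $(0,1)$ with regular singular points at $\rho=0$ and $\rho=1$. Admissibility in $\mathcal H^5$ selects exactly one Frobenius branch at each endpoint, and mode stability is the statement that these two branches fail to match for any $\lambda$ with $\Re\lambda\geq 0$ except $\lambda=1$. I would exploit the closed-form expression for $U$ to exhibit the explicit solution at $\lambda=1$, namely the time-translation mode $\mathbf g=\partial_T\psi^T|_{T=1}$ rewritten in similarity coordinates, and then attempt---along the lines of the Donninger-school strategy on related self-similar problems---either a Wronskian/connection-coefficient analysis in closed form, or a quadratic/supersymmetric identity, to rule out every other $\lambda$ with $\Re\lambda\geq 0$.

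Finally, once mode stability is established and $\lambda=1$ is isolated from the rest of $\sigma(\mathbf L)$, the Riesz projection $\mathbf P=\frac{1}{2\pi i}\oint_\gamma R_{\mathbf L}(\lambda)\,d\lambda$ around a small circle $\gamma$ about $1$ is bounded and projects onto the generalized eigenspace. To see $\range\mathbf P=\langle\mathbf g\rangle$, i.e.\ that the algebraic and geometric multiplicities coincide, I would show that $(1-\mathbf L)\mathbf h=\mathbf g$ admits no solution in $\mathcal H$ via a further Wronskian computation at $\rho=1$ for the reduced ODE. Smoothness of $\mathbf g$ follows either from standard elliptic regularity applied to that ODE or directly from the closed-form time-translation mode. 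Choosing any $\omega_0>0$ strictly smaller than the distance from the remainder of $\sigma(\mathbf L)$ to the imaginary axis then yields the stated inclusion.
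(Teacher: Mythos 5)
First, a point of comparison: the paper does not prove Proposition \ref{spectrum of evolution} itself --- it is imported verbatim from Section 3.3 of \cite{CMS23}, and the only in-paper content is the descriptive outline in Section \ref{Outline of the proof}. Your sketch reconstructs essentially that outline: $\sigma(\mathbf L_0)\subseteq\{\Re\lambda\leq-\tfrac12\}$ with resolvent control, boundedness and relative compactness of $\mathbf L'$, the conjugation $\mathbf\Gamma\mathbf L\mathbf\Gamma^{-1}=\mathbf L_0+\mathbf V$ with $\mathbf V$ compact, the analytic Fredholm theorem, mode stability, and the Riesz projection about the isolated eigenvalue $1$. So at the level of strategy you are aligned with the cited source rather than proposing a different route.

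As a proof, however, your proposal has a genuine gap exactly where the result is hard. The inclusion $\sigma(\mathbf L)\subseteq\{\Re\lambda\leq-\omega_0\}\cup\{1\}$ hinges on (i) actually constructing the intertwiner $\mathbf\Gamma$, which you only describe ``morally,'' and above all (ii) the mode stability statement that $1$ is the only spectral point with $\Re\lambda\geq-\omega_0$. You reduce (ii) to a Frobenius/connection problem for a second-order ODE and then say you would ``attempt'' a Wronskian, connection-coefficient, or supersymmetric argument; that attempt is the entire content of the proposition (the paper itself flags it as highly nontrivial), so nothing in your write-up establishes the claimed spectral gap --- your remaining steps only give discreteness and, after the unverified conjugation, finiteness of the eigenvalues in the half-plane. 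The same applies to the multiplicity-one claim, which you correctly reduce to nonexistence of $\mathbf h\in\mathcal H$ with $(1-\mathbf L)\mathbf h=\mathbf g$ but do not carry out. A secondary inaccuracy: you assert that in \eqref{perturbation} only the transport term $V_2(|\xi|)\xi^j\partial_j u_1$ obstructs compactness and that the $V_2(|\xi|)|\xi|^2u_2$ piece is ``$\mathbf L_0$-compact for free''; multiplication by a nonvanishing smooth function on the $H^4$-component gains no regularity, and the paper attributes the merely \emph{relative} compactness to the terms descending from $\cot(\psi)\bigl((\partial_t\psi)^2-(\partial_r\psi)^2\bigr)$, i.e.\ to both $V_2$ terms, not just the derivative one.
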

			
			We remark that the existence of the eigenvalue $1$ and eigenfunction $\mathbf g$ is due to the time translation symmetry of the strong field equation \eqref{sf skyrme eqn}. That the algebraic multiplicity is one is standard and follows from ODE arguments. The projection $\mathbf P$ is simply the Riesz projection associated to the eigenvalue $1$. The precise form of $\mathbf g$ is explicit and can be found in \cite{CMS23}, however, it plays no important role in the current work and so we do not include it. On the other hand, the existence of $\omega_0>0$ such that the rest of the spectrum is contained to the left of $-\omega_0$ is nontrivial and depends crucially on the precise structure of $\mathbf L'$ as discussed in Section \ref{Outline of the proof}.
			
			The properties of $(\mathbf L, \mathcal D(\mathbf L))$ which allow us to formulate a weak version of Equations \eqref{first order eqn} and \eqref{Initial_Data_Trans} are the following.
				\begin{theorem}[Theorem 3.3 of \cite{CMS23}]\label{linear theory}
					The projection $\mathbf P$ commutes with $\big(\mathbf{S}(\tau)\big)_{\t\geq0}$. Furthermore, there exists $\omega\in(0,\frac{1}{2})$ and $C\geq1$ such that
						\begin{align*}
							\mathbf{S}(\tau)\mathbf{P}\mathbf{u}&=e^\tau\mathbf{P}\mathbf{u}\text{ and}
							\\
							\|\mathbf S(\t)(1-\mathbf P)\mathbf u\|_\mathcal H&\leq Ce^{-\omega\t}\|(1-\mathbf P)\mathbf u\|_\mathcal H
						\end{align*}
					for any $\mathbf u\in\mathcal H$ and all $\tau\geq0$.
				\end{theorem}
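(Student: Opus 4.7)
The plan is to prove the two parts separately: commutation of $\mathbf P$ with $\mathbf S(\tau)$ and the identity on $\range \mathbf P$ follow from standard spectral calculus, whereas exponential decay on $\ker \mathbf P$ is substantive and requires a Gearhart--Pr\"uss argument driven by the spectral picture in Proposition~\ref{spectrum of evolution}.

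First, since $\{1\}$ is an isolated point of $\sigma(\mathbf L)$, the Riesz projection admits the contour representation
$$
\mathbf P = \frac{1}{2\pi i}\oint_\gamma R_{\mathbf L}(\lambda)\,d\lambda,
$$
with $\gamma$ a small positively oriented circle around $1$ inside $\rho(\mathbf L)$. Resolvents of a generator commute with the semigroup on $\rho(\mathbf L)$, and a Fubini argument thus yields $\mathbf S(\tau)\mathbf P = \mathbf P \mathbf S(\tau)$. Because $\ker(1-\mathbf L)=\langle \mathbf g\rangle$ is one-dimensional with algebraic multiplicity one, $\mathbf L$ acts on $\range \mathbf P$ as multiplication by $1$, so $\mathbf S(\tau)\mathbf P \mathbf u = e^\tau \mathbf P\mathbf u$ for every $\mathbf u \in \mathcal H$. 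This disposes of the first two assertions.

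The core of the proof is decay on the invariant complementary subspace $\mathcal H_s := \ker\mathbf P$. By Proposition~\ref{spectrum of evolution}, $\sigma(\mathbf L|_{\mathcal H_s}) \subseteq \{\Re\lambda \leq -\omega_0\}$, providing a uniform spectral gap. Since $\mathcal H_s$ is a Hilbert space, I would invoke the Gearhart--Pr\"uss theorem: exponential decay of $\mathbf S(\tau)|_{\mathcal H_s}$ at any rate $\omega\in(0,\omega_0)$ is equivalent to a uniform resolvent bound
$$
\sup_{\Re\lambda \geq -\omega}\|R_{\mathbf L}(\lambda)(1-\mathbf P)\|_{\mathcal B(\mathcal H)}<\infty.
$$
To establish it, I would use the conjugation $\mathbf\Gamma\mathbf L\mathbf\Gamma^{-1} = \mathbf L_0 + \mathbf V$ with $\mathbf V$ compact described in Section~\ref{Outline of the proof}, reducing the problem to a uniform bound on $R_{\mathbf L_0+\mathbf V}(\lambda)$. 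For $\mathbf L_0$ one has direct resolvent estimates on $\{\Re\lambda \geq -\tfrac{1}{2}+\varepsilon\}$ inherited from the explicit structure of the free radial wave flow in similarity coordinates. The second resolvent identity $R_{\mathbf L_0+\mathbf V}(\lambda) = R_{\mathbf L_0}(\lambda)\bigl(I+\mathbf V R_{\mathbf L_0}(\lambda)\bigr)^{-1}$, together with the analytic Fredholm theorem, then meromorphically continues the resolvent to $\{\Re\lambda\geq-\omega\}$ with only finitely many poles, all of which lie in $\{\Re\lambda<0\}$ by Proposition~\ref{spectrum of evolution} and are absorbed by the restriction to $\mathcal H_s$.

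The main obstacle is proving the high-frequency decay $\|\mathbf V R_{\mathbf L_0}(\lambda)\|_{\mathcal B(\mathcal H)}\to 0$ as $|\Im\lambda|\to\infty$ uniformly along strips $\{\Re\lambda \geq -\omega\}$; this is what upgrades the analytic Fredholm conclusion from meromorphic regularity to a uniform operator-norm bound. The delicate point is that $\mathbf L'$ is only \emph{relatively} compact with respect to $\mathbf L_0$---not compact---due to the first-order derivative term $V_2(|\xi|)\xi^j\partial_j u_1$ appearing in \eqref{perturbation}; this is precisely why the nontrivial conjugation by $\mathbf\Gamma$ is essential, since it converts the relatively compact $\mathbf L'$ into a genuinely compact $\mathbf V$, after which smoothing/oscillatory estimates on $\mathbf V R_{\mathbf L_0}(\lambda)$ become tractable. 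Modulo this conjugation and the free resolvent bounds for $\mathbf L_0$, the Gearhart--Pr\"uss machinery assembles the claimed exponential decay with the desired constants $\omega\in(0,\tfrac{1}{2})$ and $C\geq 1$.
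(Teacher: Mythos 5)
This statement is quoted verbatim from \cite{CMS23} (their Theorem 3.3); the present paper gives no proof of it, only a high-level description in Section~\ref{Outline of the proof} of how it is established there. Your sketch reconstructs essentially that same strategy: Riesz projection and spectral calculus for the rank-one piece, conjugation $\mathbf\Gamma(\mathbf L_0+\mathbf L')\mathbf\Gamma^{-1}=\mathbf L_0+\mathbf V$ with $\mathbf V$ compact, analytic Fredholm plus a high-frequency smallness estimate on $\mathbf V R_{\mathbf L_0}(\lambda)$ to get uniform resolvent bounds, and Gearhart--Pr\"uss to convert these into exponential decay on $\ker\mathbf P$. So the route is the right one, not a genuinely different one, and the places you flag as the hard part (the oscillatory/smoothing bounds that make $\|\mathbf V R_{\mathbf L_0}(\lambda)\|\to 0$ as $|\Im\lambda|\to\infty$, and the construction of $\mathbf\Gamma$) are exactly where the real work in \cite{CMS23} lies.

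Two small imprecisions worth fixing. First, by Proposition~\ref{spectrum of evolution}, the only spectrum of $\mathbf L_0+\mathbf V$ in a strip $\{\Re\lambda\geq-\omega\}$ with $\omega<\omega_0$ is $\{1\}$, so the ``finitely many poles, all in $\{\Re\lambda<0\}$'' you mention do not in fact appear there; the sole pole is at $\lambda=1$, which is removed by restricting to $\ker\mathbf P$. (The finite set of negative-real-part eigenvalues lives in $\{-\tfrac12+\varepsilon\leq\Re\lambda\leq-\omega_0\}$, outside the strip you need.) Second, the statement that $\mathbf L'$ fails to be compact solely because of the term $V_2(|\xi|)\,\xi^j\partial_j u_1$ is not quite the full story: the companion term $V_2(|\xi|)|\xi|^2 u_2$ has the same origin (both come from $\cot(\psi)((\partial_t\psi)^2-(\partial_r\psi)^2)$), and what obstructs plain compactness is that $V_2$ does not vanish at $|\xi|=1$ (indeed $V_2(1)=2$), so the perturbation is not smoothing near the boundary of the ball. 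Neither point changes the validity of the overall plan.
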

	
			For the nonlinear problem, we restrict our attention to the real-valued subspace of $\mc H^k$. Given $\d>0$ and $k\in\mathbb N$, we define
				$$
					\mathcal B_\delta^k:=\{\mathbf u\in\mathcal H^k:\|\mathbf u\|_{\mathcal H^k}\leq\delta\}.
				$$
			If $k=5$, then we will simply write $\mc B_\d:=\mc B_\d^5$. We also define the nonlinear expressions
				$$
					\mathbf{N}(\mathbf{u})(\xi):=
						\begin{pmatrix}
							0
							\\
							N\big(|\xi|u_1(\xi),\xi^j\partial_j u_1(\xi),|\xi|u_2(\xi),|\xi|\big)
						\end{pmatrix}
				$$
			and
				$$
					\mathbf{G}_{\lambda,T}(\tau,\mathbf{u})(\xi):=
						\begin{pmatrix}
							0
							\\
							\lambda^2T^2e^{-2\tau}\mathcal G_{\lambda,T}\big(|\xi|u_1(\xi),\xi^j\partial_j u_1(\xi),|\xi|u_2(\xi),\tau,|\xi|\big)
						\end{pmatrix}.
				$$
			In Section \ref{Nonlinear estimates}, we will show that $\mathbf G_{\lambda,T}$ defines a uniformly in time Lipschitz mapping on sufficiently small balls in $\mc H^k$. From \cite{CMS23}, we have the following local Lipschitz estimate for $\mathbf N$ on $\mc B_\d^k$.
				\begin{proposition}[Proposition 4.1 of \cite{CMS23}]\label{locally lipschitz nonlinearity estimate}
					Let $k\in\mathbb N$ with $k\geq5$. There exists $\d_0>0$ sufficiently small such that for any $\d\in(0,\d_0]$, the map $\mathbf N:\mathcal B_\d^k\to\mathcal H^k$ is defined and satisfies the local Lipschitz bound
						$$
							\|\mathbf N(\mathbf u)-\mathbf N(\mathbf v)\|_{\mathcal H^k}\lesssim_k\big(\|\mathbf u\|_{\mathcal H^k}+\|\mathbf v\|_{\mathcal H^k}\big)\|\mathbf u-\mathbf v\|_{\mathcal H^k}.
						$$
				\end{proposition}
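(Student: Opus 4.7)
The plan is to exploit the fact that $\mathbf N$ is, by construction, the quadratic-and-higher remainder in the Taylor expansion at $\mathbf u = \mathbf 0$ of the smooth operator
$$
\mathbf F(\mathbf u)(\xi) := \begin{pmatrix} 0 \\ f_{SF}\bigl(|\xi|(U_1(|\xi|)+u_1(\xi)),\,\xi^j\partial_j u_1(\xi) + |\xi|\partial_\rho U_1(|\xi|),\,|\xi|(U_2(|\xi|)+u_2(\xi)),\,|\xi|\bigr)\end{pmatrix},
$$
since the coefficients $V_1, V_2$ in \eqref{perturbation} are precisely the first-derivative coefficients of $f_{SF}$ evaluated at the background $(U_1,\partial_\rho U_1,U_2)$, and the subtraction of $f_{SF}$ evaluated at the background removes the constant term. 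Taylor's theorem with integral remainder then gives
$$
\mathbf N(\mathbf u) - \mathbf N(\mathbf v) = \int_0^1\!\!\int_0^1 t\, D^2\mathbf F\bigl(ts\mathbf u + t(1-s)\mathbf v\bigr)\bigl[\mathbf u - \mathbf v,\,s\mathbf u + (1-s)\mathbf v\bigr]\,ds\,dt,
$$
so the proposition reduces to a uniform bilinear bound
$$
\|D^2\mathbf F(\mathbf h)[\mathbf w,\mathbf z]\|_{\mc H^k} \lesssim_k \|\mathbf w\|_{\mc H^k}\|\mathbf z\|_{\mc H^k}, \qquad \mathbf h,\mathbf w,\mathbf z \in \mc B_\d^k.
$$

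To invoke this bound, one first verifies smoothness of $\mathbf F$ on $\mc B_{\d_0}^k \to \mc H^k$. The three summands of $f_{SF}$ carry apparent $\rho^{-1},\rho^{-2},\rho^{-3}$ prefactors, but the numerators $1 - y\cot y$ and $\tfrac32\sin(2y) - 2y - y^2\cot y$ vanish to sufficient even order at $y = 0$ to cancel them, so that after expansion each constituent is a real-analytic function of its arguments on $\{y\notin\pi\Z\setminus\{0\}\}\times\R$. Since the background profile satisfies $U(\rho)=\rho\tilde U(\rho)\in[0,\pi/2]$ on $\overline{\B^7}$, and since $k - 1 \geq 4 > 7/2$ supplies the embedding $H^{k-1}(\B^7)\hookrightarrow L^\infty(\B^7)$, one may choose $\d_0$ small enough that the shifted argument $|\xi|(U_1 + u_1)(\xi)$ remains in a compact subinterval of $(0,\pi)$ uniformly over $\mathbf u\in\mc B_{\d_0}^k$, safely away from the poles of $\cot$. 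The radial representative structure is preserved by Lemma 2.1 of \cite{G22a}, so each composition lies in $H^{k-1}_\text{rad}(\B^7)$.

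With smoothness in hand, $D^2\mathbf F(\mathbf h)[\mathbf w,\mathbf z]$ is a finite sum of terms of the schematic form $A\bigl(\xi,\mathbf h(\xi),\partial\mathbf h(\xi)\bigr)\,B_j(\mathbf w)(\xi)\,C_\ell(\mathbf z)(\xi)$, where $A$ is a smooth coefficient evaluated at a uniformly bounded argument and each $B_j,C_\ell$ belongs to $\{w_1,w_2,\xi^j\partial_j w_1,z_1,z_2,\xi^j\partial_j z_1\}$ up to multiplication by a bounded power of $|\xi|$. The Banach-algebra property of $H^{k-1}(\B^7)$ in this dimension, combined with the standard Moser composition estimate $\|A(\cdot,\mathbf h,\partial\mathbf h)\|_{H^{k-1}}\leq C(\|\mathbf h\|_{\mc H^k})$, then delivers the required bilinear bound uniformly for $\mathbf h\in\mc B_{\d_0}^k$. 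Substituting into the Taylor integral from the first paragraph completes the proof.

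The main obstacle sits in the second paragraph: one must track, via the power series of $\sin$, $\cos$, and $\cot$, that every $\rho^{-n}$ singularity is cancelled by vanishing of the appropriate order in its numerator, and that the surviving coefficients depend on $\rho$ only through $\rho^2$ so that the $C^\infty_e[0,1]$ structure (equivalently, radial $H^k$ regularity) is preserved under every derivative appearing in the $H^{k-1}$ norm. This is the only place where the explicit algebraic form of $f_{SF}$ is used; once the bookkeeping is done, the remainder is a textbook application of Sobolev composition and algebra estimates.
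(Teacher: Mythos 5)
Your proposal takes essentially the same route as the cited proof in \cite{CMS23} (and the parallel argument for $\mathbf G_{\lambda,T}$ in Section~\ref{Nonlinear estimates} of this paper): recognize $\mathbf N$ as the quadratic-and-higher Taylor remainder of the smooth operator $\mathbf F$ at $\mathbf 0$, then obtain the bilinear bound on $D^2\mathbf F$ via Moser-type composition estimates, Sobolev extension, and the Banach-algebra property of $H^{k-1}(\B^7)$, after checking that the background keeps the $\cot$ argument inside $(0,\pi)$ and that the apparent $\rho^{-n}$ singularities cancel. One small slip: two applications of the fundamental theorem of calculus give
$\mathbf N(\mathbf u)-\mathbf N(\mathbf v)=\int_0^1\!\!\int_0^1 D^2\mathbf F\bigl(t(s\mathbf u+(1-s)\mathbf v)\bigr)\bigl[\mathbf u-\mathbf v,\,s\mathbf u+(1-s)\mathbf v\bigr]\,dt\,ds$
without the extra factor of $t$; since $|t|\leq1$ this is harmless and the stated estimate follows as you argue.
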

	
			Furthermore, we denote the blowup profile by
				$$
					\mb U(\xi):=
					\begin{pmatrix}
						U_1(|\xi|)
						\\
						U_2(|\xi|)
					\end{pmatrix}.
				$$ 
			For functions $\mb v=(v_1,v_2)\in C^1_\text{rad}(\overline{\mathbb B^7_R})\times C_\text{rad}(\overline{\mathbb B^7_R})$, $R>0$, we define the rescaling operator
				$$
					\mathcal R(\mb v,T)(\xi):=\begin{pmatrix}
	   					T v_1(T\xi) 
						\\
	  					T^2 v_2(T\xi) 
					\end{pmatrix}
				$$
			for $\xi\in\overline{\mathbb B^7}$. For $T$ in an interval containing $1$, we define the initial data operator as
				$$
					\Phi_0(\mb v,T)(\xi)=\mathcal R(\mb v,T)(\xi)+\mathcal R(\mb U,T)(\xi)-\mathcal R(\mb U,1)(\xi).
				$$
			Now, consider the Hilbert space
				$$
					\mc Y^k:=H^{k+1}_\text{rad}(\mathbb B^7_2)\times H^{k}_\text{rad}(\mathbb B^7_2)
				$$ 
			with the standard inner product and denote by $\mc B_{\mc Y^k}$ the unit ball in $\mc Y^k$. For simplicity, we write $\mc Y:=\mc Y^5$. From \cite{CMS23}, we recall the following mapping properties of the initial data operator.
			
			\begin{lemma}[Lemma 4.9 of \cite{CMS23}]\label{id operator}
				Let $k\geq5$ and $\d\in(0,\d_0]$ for $\d_0>0$ sufficiently small. The initial data operator $\Phi_0:\mc B_{\mc Y^k}\times[1-\d,1+\d]\to\mathcal H^k$ is Lipschitz continuous, i.e., 
					$$
						\|\Phi_0(\mb v, T)-\Phi_0(\mb w, T')\|_{\mc H^k}\lesssim_k\|\mb v-\mb w\|_{\mc Y^k}+| T- T'|
					$$
				for all $\mb v,\mb w\in\mc B_{\mc Y^k}$ and $ T, T'\in[1-\d,1+\d]$. Moreover, the initial data operator can also be viewed as a map $\Phi_0:\mc B_{\mc Y^{k-1}}\times[1-\d,1+\d]\to\mathcal H^k$ and satisfies the bound 
					$$
						\|\Phi_0(\mb v, T)\|_{\mc H^k}\lesssim_k1+\|\mb v\|_{\mc Y^{k-1}}.
					$$
				Lastly, if $\|\mb v\|_{\mc Y^k}\leq\d$, then
					$$
						\|\Phi_0(\mb v,T)\|_{\mathcal H^k}\lesssim_k\d
					$$
				 for all $T\in[1-\d,1+\d]$.
			\end{lemma}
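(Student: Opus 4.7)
The plan is to exploit two structural features of $\Phi_0$: the rescaling operator $\mathcal R(\cdot,T)$ is bounded on Sobolev spaces with constants uniform in $T\in[1-\delta,1+\delta]$, and differentiating $\mathcal R(\mathbf v,T)$ in $T$ costs exactly one additional spatial derivative, which is precisely why the parameter $T$ is measured in $\mathcal Y^k$ (one order higher than $\mathcal H^k$) while $\mathbf v$ itself is only required in $\mathcal Y^{k-1}$ for the base-point bound. Throughout I will use that for $T\in[1-\delta,1+\delta]$ and $\xi\in\mathbb B^7$ one has $T\xi\in\mathbb B_2^7$, so that pointwise evaluation of $\mathbf v\in\mathcal Y^k$ and of $\mathbf U$ is well-defined.

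\textbf{Step 1: A base rescaling estimate.} First I would establish that for $T\in[1-\delta,1+\delta]$, the map $\mathbf v\mapsto\mathcal R(\mathbf v,T)$ satisfies
$\|\mathcal R(\mathbf v,T)\|_{\mathcal H^k}\lesssim_k\|\mathbf v\|_{\mathcal Y^{k-1}}.$
This follows from the chain rule $\partial^\alpha[Tv_1(T\cdot)]=T^{1+|\alpha|}(\partial^\alpha v_1)(T\cdot)$ and a change of variables $\eta=T\xi$, noting that $(1-\delta)^7\leq|\det(T^{-1}I)|\leq(1+\delta)^7$ and that $T\mathbb B^7\subset\mathbb B_2^7$. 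The analogous bound holds for the second component. This already yields the second statement of the lemma, since
$\|\Phi_0(\mathbf v,T)\|_{\mathcal H^k}\leq\|\mathcal R(\mathbf v,T)\|_{\mathcal H^k}+\|\mathcal R(\mathbf U,T)-\mathcal R(\mathbf U,1)\|_{\mathcal H^k}\lesssim_k\|\mathbf v\|_{\mathcal Y^{k-1}}+1,$
where the last term is bounded by an absolute constant because $\mathbf U$ is smooth on $\overline{\mathbb B_2^7}$.

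\textbf{Step 2: Lipschitz dependence on $T$.} Next I would compute $\partial_T\mathcal R(\mathbf v,T)$ pointwise: for the first component, $\partial_T[Tv_1(T\xi)]=v_1(T\xi)+T\xi^j(\partial_jv_1)(T\xi)$, and similarly for the second component one derivative in $\xi$ appears. Applying the $H^k$- and $H^{k-1}$-norms and invoking Step 1 with shifted indices gives
$\|\partial_T\mathcal R(\mathbf v,T)\|_{\mathcal H^k}\lesssim_k\|\mathbf v\|_{\mathcal Y^k},$
uniformly in $T\in[1-\delta,1+\delta]$. The fundamental theorem of calculus then produces
$\|\mathcal R(\mathbf v,T)-\mathcal R(\mathbf v,T')\|_{\mathcal H^k}\lesssim_k\|\mathbf v\|_{\mathcal Y^k}|T-T'|,$
and the same argument applied to the smooth profile $\mathbf U$ yields $\|\mathcal R(\mathbf U,T)-\mathcal R(\mathbf U,T')\|_{\mathcal H^k}\lesssim_k|T-T'|$.

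\textbf{Step 3: Combining.} For the main Lipschitz statement I write
$\Phi_0(\mathbf v,T)-\Phi_0(\mathbf w,T')=\mathcal R(\mathbf v-\mathbf w,T)+\big[\mathcal R(\mathbf w,T)-\mathcal R(\mathbf w,T')\big]+\big[\mathcal R(\mathbf U,T)-\mathcal R(\mathbf U,T')\big].$
Estimating the three terms by Step 1 and Step 2 respectively, and using $\mathbf w\in\mathcal B_{\mathcal Y^k}$ so $\|\mathbf w\|_{\mathcal Y^k}\leq 1$, gives the claimed bound. The third (smallness) statement is immediate: for $\|\mathbf v\|_{\mathcal Y^k}\leq\delta$ one has $\|\mathbf v\|_{\mathcal Y^{k-1}}\leq\delta$, so by the second statement $\|\Phi_0(\mathbf v,T)\|_{\mathcal H^k}\lesssim_k\|\mathbf v\|_{\mathcal Y^{k-1}}+|T-1|\lesssim_k\delta$.

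I expect no serious obstacle; the only point demanding care is the regularity bookkeeping in Step 2, where one must verify that the extra derivative incurred by $\partial_T$ is exactly compensated by the passage from $\mathcal Y^{k-1}$ to $\mathcal Y^k$, and that the chain-rule constants remain uniform over the compact parameter interval $[1-\delta,1+\delta]$.
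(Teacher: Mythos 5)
Your proposal is correct and follows the natural, essentially unique route: rescaling operators are bounded on Sobolev spaces uniformly in $T$ over a compact interval, differentiation in $T$ costs one spatial derivative (which is exactly why $\mathcal Y^k$ is one derivative higher than $\mathcal H^k$), and the decomposition in Step~3 is the obvious one. Note that the present paper simply cites this as Lemma 4.9 of \cite{CMS23} and does not reproduce its proof, so there is no in-paper proof to compare against; your argument matches what one would expect CMS23 to do. Two minor bookkeeping remarks: in the final smallness statement you attribute the bound $\|\mathbf v\|_{\mathcal Y^{k-1}}+|T-1|$ to ``the second statement,'' but that statement only gives $1+\|\mathbf v\|_{\mathcal Y^{k-1}}$; what you actually use is the refined combination of Step~1 (for $\mathcal R(\mathbf v,T)$) and Step~2 (for $\mathcal R(\mathbf U,T)-\mathcal R(\mathbf U,1)\lesssim|T-1|$), which is fine but should be said. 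Also, the claim that $\mathbf U$ is smooth on $\overline{\mathbb B_2^7}$ relies on $\rho^*=\sqrt{5}>2$ for $d=5$, which holds but deserves a one-line check; and the FTC step in $\mathcal H^k$ implicitly uses that $T\mapsto\mathcal R(\mathbf v,T)$ is $C^1$ into $\mathcal H^k$ for $\mathbf v\in\mathcal Y^k$, which follows by density of smooth functions together with the uniform bounds you establish.
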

		
		\subsubsection{The Abstract Initial Value Problem}
			Our main object of study is the following abstract initial value problem
				\begin{equation}
				\begin{cases}
					\partial_\t\Phi(\t)=\mathbf L\Phi(\t)+\mathbf N\big(\Phi(\t)\big)+\mathbf{G}_{\lambda,T}\big(\tau,\Phi(\t)\big),&\t>0
					\\
					\Phi(0)=\Phi_0(\mathbf v,T)
				\end{cases} \label{abstract ivp}
				\end{equation}
			posed on the space $\mathcal H$. The main theorem follows from showing that for each sufficiently small $\lambda$ and $\mathbf v$, there exists a unique $T$ and a unique, global, classical solution $\Phi$ which decays exponentially as $\tau\to\infty$. To establish this, we first appeal to the Duhamel formula and seek the existence of a unique, global strong solution $\Phi\in C([0,\infty),\mathcal H)$ and a unique $T$ close to $1$ such that
				\begin{equation}
					\Phi(\tau)=\mathbf S(\tau)\Phi_0(\mathbf v,T)+\int_0^\tau\mathbf S(\tau-s)\Big(\mathbf N\big(\Phi(s)\big)+\mathbf{G}_{\lambda,T}\big(s,\Phi(s)\big)\Big)ds \label{duhamel form}
				\end{equation}
			for all $\t\in[0,\infty)$ with $\Phi(\tau)$ exhibiting the same decay as the semigroup $\big(\mathbf S(\tau)\big)_{\tau\geq0}$. 
		
\section{Existence of strong solutions}\label{Existence of strong solutions}
	This section is devoted to solving Equation \eqref{duhamel form}. We begin by proving that for all $\lambda,T>0$, $\mathbf G_{\lambda,T}$ is a uniformly in time Lipschitz mapping on small balls in $\mathcal H^k$.
	
	\subsection{Nonlinear estimates}\label{Nonlinear estimates}
		The aim of this section is to prove the following bound and uniform Lipschitz estimate on $\mathbf G_{\lambda,T}$.
			\begin{proposition}\label{lipschitz nonlinearity estimate}
				Let $k\in\mathbb N$ with $k\geq5$ and $\d\in(0,\d_0]$ for $\d_0>0$ sufficiently small. Fix $0<\lambda_\text{max},T_\text{max}<\infty$. For all $\lambda\in(0,\lambda_\text{max}]$ and $T\in(0,T_\text{max}]$, the map $\mathbf G_{\lambda,T}:[0,\infty)\times\mathcal B_\d^k\to\mathcal H^k$ is defined and satisfies the bound
					\begin{equation}
						\sup_{\t\geq0}e^{2\t}\|\mathbf G_{\lambda,T}(\tau,\mathbf u)\|_{\mathcal H^k}\lesssim_k\lambda^2T^2. \label{GlT smallness}
					\end{equation}
				Moreover, for all $\lambda,\lambda'\in(0,\lambda_\text{max}]$ and $T,T'\in(0,T_\text{max}]$ and $\mathbf u,\mathbf v\in\mathcal B_\delta^k$, we have the Lipschitz estimates
					\begin{equation}
						\sup_{\tau\geq0}e^{2\tau}\|\mathbf G_{\lambda,T}(\tau,\mathbf u)-\mathbf G_{\lambda,T}(\tau,\mathbf v)\|_{\mathcal H^k}\lesssim_k\lambda^2T^2\|\mathbf u-\mathbf v\|_\mc H \label{GlT uvlip}
					\end{equation}
				and
					\begin{equation}
						\sup_{\tau\geq0}e^{2\tau}\|\mathbf G_{\lambda, T}(\tau,\mathbf u)-\mathbf G_{\lambda', T'}(\tau,\mathbf u)\|_{\mathcal H^k}\lesssim_k(\lambda T+\lambda' T')|\lambda T-\lambda' T'|. \label{GlT LTlip}
					\end{equation}
			\end{proposition}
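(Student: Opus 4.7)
The plan is to derive all three estimates from pointwise control of $g_{\lambda,T}$, standard Moser/composition inequalities in $H^{k-1}(\B^7)$, and careful bookkeeping of the $\lambda^2T^2e^{-2\tau}$ prefactor. Since $\mathbf G_{\lambda,T}(\tau,\mathbf u) = \bigl(0,\,\lambda^2T^2e^{-2\tau}\mathcal G_{\lambda,T}(\cdots)\bigr)$ and $\mathcal H^k = H^k\times H^{k-1}$, the bound \eqref{GlT smallness} reduces to showing the uniform $H^{k-1}$ estimate $\|\mathcal G_{\lambda,T}(\tau,\mathbf u)\|_{H^{k-1}(\B^7)}\lesssim_k 1$ for all $\tau\geq 0$, $\lambda\in(0,\lambda_{\max}]$, $T\in(0,T_{\max}]$, and $\mathbf u\in\mathcal B_\delta^k$. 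The principal obstacle is that the factor $g_{\lambda,T} = \bigl(\lambda^2 T^2 e^{-2\tau} + 4\sin^2(\rho\psi_1)/\rho^2\bigr)^{-1}$, with $\psi_1 := U_1 + u_1$, threatens to blow up as $\tau\to\infty$ if $\sin^2(\rho\psi_1)/\rho^2$ is allowed to vanish.

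The saving observation is the nondegeneracy of the blowup profile at the origin, $U_1(0)=\tilde U(0)=4/\sqrt{5}\neq 0$, together with $\rho\psi_1 = U(\rho)+\rho u_1(\rho)$ remaining uniformly bounded away from $\pi\Z\setminus\{0\}$ on $\rho\in[0,1]$ (since $U(1) = \pi/2 < \pi$ as $1 < \rho^* = \sqrt{5}$). Applying the Sobolev embedding $H^k(\B^7)\hookrightarrow L^\infty(\B^7)$, valid for $k\geq 5>7/2$, one obtains a uniform lower bound $4\sin^2(\rho\psi_1)/\rho^2 \geq c_0 > 0$ on $[0,1]$ for all $\mathbf u\in\mathcal B_\delta^k$ with $\delta$ sufficiently small, and crucially independent of $\tau,\lambda,T$. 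Consequently $0 < g_{\lambda,T} \leq c_0^{-1}$. The remaining factor $G = f_{WM} - f_{SF}$ is smooth in its dynamical arguments on the relevant range (its $\cot$ singularities being tamed by the same bound on $\rho\psi_1$), so a standard Moser estimate in $H^{k-1}(\B^7)$ yields the desired uniform bound, establishing \eqref{GlT smallness}.

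The Lipschitz estimate \eqref{GlT uvlip} follows from the fundamental-theorem-of-calculus identity
$$
\mathcal G_{\lambda,T}(\tau,\mathbf u)-\mathcal G_{\lambda,T}(\tau,\mathbf v) = \int_0^1 D_{\mathbf u}\mathcal G_{\lambda,T}\bigl(\tau,\mathbf v+s(\mathbf u-\mathbf v)\bigr)\cdot(\mathbf u-\mathbf v)\,ds,
$$
combined with $\partial_{\psi_1}g_{\lambda,T} = -g_{\lambda,T}^2\cdot 8\sin(\rho\psi_1)\cos(\rho\psi_1)/\rho$ (bounded by $c_0^{-2}$ times a smooth function) and the same Moser estimate applied uniformly in $s$. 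For \eqref{GlT LTlip}, decompose
$$
\lambda^2T^2\mathcal G_{\lambda,T}-{\lambda'}^2{T'}^2\mathcal G_{\lambda',T'} = (\lambda^2T^2-{\lambda'}^2{T'}^2)\mathcal G_{\lambda,T} + {\lambda'}^2{T'}^2(\mathcal G_{\lambda,T}-\mathcal G_{\lambda',T'}),
$$
factor $\lambda^2T^2-{\lambda'}^2{T'}^2 = (\lambda T + \lambda'T')(\lambda T - \lambda'T')$, and use the resolvent-type identity
$$
g_{\lambda,T}-g_{\lambda',T'} = -g_{\lambda,T}\,g_{\lambda',T'}(\lambda^2T^2-{\lambda'}^2{T'}^2)e^{-2\tau}
$$
to handle the second summand; the trailing factor ${\lambda'}^2{T'}^2 e^{-2\tau}\leq\lambda_{\max}^2 T_{\max}^2$ is absorbed into the implicit constant.

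The main difficulty is the first step: extracting a $\tau$-, $\lambda$-, $T$- and $\mathbf u$-uniform lower bound on the denominator of $g_{\lambda,T}$. Without this uniformity, the critical $e^{-2\tau}$ decay built into $\mathbf G_{\lambda,T}$ would be destroyed, and with it the contraction argument for the Duhamel equation \eqref{duhamel form}. All subsequent steps are standard composition-estimate bookkeeping once this denominator control is in hand.
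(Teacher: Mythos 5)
Your proposal is correct and takes a genuinely different route from the paper. The paper proceeds by Taylor expanding $\mathcal G(\zeta_1,\zeta_2,\zeta_3,\sigma,|\xi|)$ to second order in the dynamical variables $\zeta_j$ around zero, and then treats each order separately: the zeroth- and first-order coefficient functions are written out explicitly as rational functions of $|\xi|$ and $\sigma$ whose $\sigma$-uniform $H^k(\mathbb B^7)$ bounds are verified by direct inspection (together with a fundamental-theorem-of-calculus argument in $\sigma$ for the $\lambda T$-Lipschitz estimate), while the quadratic remainder is handled by cutting off, extending to $\mathbb R^7$, and invoking Moser's inequality. You bypass the Taylor expansion entirely by first isolating the structural fact that makes everything work: the denominator of $g_{\lambda,T}$ stays uniformly bounded below on $\mathbb B^7$ thanks to the profile nondegeneracy $\tilde U(0)=4/\sqrt{5}\neq0$ and $U(1)=\pi/2<\pi$, so that $\mathcal G_{\lambda,T}$ is a composition against a fixed smooth function uniformly in $\tau,\lambda,T,\mathbf u$, to which Moser applies directly. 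Your treatment of \eqref{GlT LTlip} via the resolvent identity $g_{\lambda,T}-g_{\lambda',T'}=-g_{\lambda,T}g_{\lambda',T'}(\lambda^2T^2-\lambda'^2T'^2)e^{-2\tau}$ is also cleaner than the paper's FTC-in-$\sigma$ argument. What the paper's route buys is the explicit rational-function formulas for $\mathcal G_0,\mathcal G_1,\mathcal G_2,\mathcal G_3$, which make the $\sigma\geq0$-uniform bounds a matter of visible inspection and mirror the order-by-order treatment of $\mathbf N$ inherited from \cite{CMS23}; what yours buys is economy and the explicit identification of the denominator lower bound, which is only implicit in the paper's calculations. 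One bookkeeping point worth making explicit if you were to write this up: to apply Moser's inequality on $\mathbb B^7$ (rather than $\mathbb R^7$) you still need either a Sobolev extension step, as the paper uses with the operator $\mathcal E$, or a bounded-domain version of the composition estimate; and since $\mathcal G_{\lambda,T}(\tau,\mathbf 0)\neq\mathbf0$, you should split off that constant-in-$\mathbf u$ term before invoking the Moser bound, which is harmless because it lies in $H^{k-1}(\mathbb B^7)$ with norm controlled uniformly in $\sigma=\lambda Te^{-\tau}$.
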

		Note that the upper bounds on $\lambda$ and $T$ are arbitrary. Eventually, we will take $T$ to be close to $1$ and $\lambda$ close to $0$.
		
		\subsubsection{Proof of Proposition \ref{lipschitz nonlinearity estimate}}
			We begin by bringing $\mathbf G_{\lambda,T}(\mathbf u)(\tau,\xi)$ into a form which is straightforward to estimate. In particular, note that the dependence on $\lambda$, $T$, and $\tau$ always shows up in the form of $\lambda^2T^2e^{-2\tau}$. With that in mind, we define the auxiliary quantities
				$$
					\mathfrak g(x,\sigma,|\xi|)=\bigg(\sigma^2+\frac{4\sin^2(x)}{|\xi|^2}\bigg)^{-1}
				$$
				and
				$$
					\mathcal G\big(\zeta_1,\zeta_2,\zeta_3,\sigma,|\xi|\big)=\mathfrak g\big(|\xi|U_1(|\xi|)+\zeta_1,\sigma,|\xi|\big)G\big(|\xi|U_1(|\xi|)+\zeta_1,\Lambda U_1(|\xi|)+\zeta_2,|\xi|U_2(|\xi|)+\zeta_3,|\xi|\big)
				$$
			so that we have the equality
				$$
					\mathcal G_{\lambda,T}\big(\zeta_1,\zeta_2,\zeta_3,\tau,|\xi|\big)=\mathcal G\big(\zeta_1,\zeta_2,\zeta_3,\lambda Te^{-\tau},|\xi|\big).
				$$
			for appropriate choices of $\zeta_1,\zeta_2,\zeta_3$. According to \eqref{def:fsf}, this expression is not defined whenever
				$$
					|\xi|U_1(|\xi|)+\zeta_1=k\pi,\quad k\in\mathbb Z,\;\xi\in\overline{\mathbb B^7_2}.
				$$
			Note that $|\xi|U_1(|\xi|)\geq0$ for all $\xi\in\overline{\mathbb B^7_2}$ with equality if and only if $\xi=0$. Thus, it suffices to ensure
				$$
					0<|\xi|U_1(|\xi|)+\zeta_1<\pi,\quad k\in\mathbb Z,\;\xi\in\overline{\mathbb B^7_2}\setminus\{0\}.
				$$
			A direct calculation shows that $\||\cdot|U_1(|\cdot|)\|_{L^{\infty}(\mathbb B^7_2)}<\pi$. Upon requiring 
				$$
					|\zeta_1|\leq\frac{1}{2}\big(\pi-\||\cdot|U_1(|\cdot|)\|_{L^{\infty}(\mathbb B^7_2)}\big)=:A,
				$$
			we ensure that the function 
				$$
					(\zeta_1,\zeta_2,\zeta_3,\sigma,\xi)\in[-A,A]\times\R\times\R\times[0,\infty)\times\overline{\mathbb B^7_R}\mapsto\mathcal G\big(\zeta_1,\zeta_2,\zeta_3,\sigma,|\xi|\big)
				$$ 
			is defined for $R\in(0,2]$ and is smooth. In particular, by the Sobolev embedding $H^k(\mathbb B_R^7)\hookrightarrow L^\infty(\mathbb B_R^7)$ for $k\geq4$, it is possible to pick $\delta_0>0$ small enough and depending on $R\in(0,2]$ such that for all $\mathbf u=(u_1,u_2)\in C^\infty_\text{rad}(\overline{\mathbb B_R^7})\times C^\infty_\text{rad}(\overline{\mathbb B_R^7})$ with $\|\mathbf u\|_{\mathcal H_R^k}\leq\delta_0$, we have
				$$
					(\sigma,\xi)\in[0,\infty)\times\overline{\mathbb B_R^7}\mapsto\mathcal G\big(|\xi|u_1(\xi),\xi^j\partial_ju_1(\xi),|\xi|u_2(\xi),\sigma,|\xi|\big)
				$$
			defines a smooth, radial function. 
			
			By Taylor's theorem, we can expand in the $\zeta_1,\zeta_2,\zeta_3$ variables as follows
				\begin{align}
				\begin{split}
					\mathcal G&\big(\zeta_1,\zeta_2,\zeta_3,\sigma,|\xi|\big)=\mathcal G_0(\sigma,|\xi|)+\mathcal G_1(\sigma,|\xi|)\zeta_1+\mathcal G_2(\sigma,|\xi|)\zeta_2+\mathcal G_3(\sigma,|\xi|)\zeta_3
					\\
					&+\zeta_1^2\int_0^1(1-s)\partial_1^2\mathcal G\big(s\zeta_1,s\zeta_2,s\zeta_3,\sigma,|\xi|\big)ds+\zeta_2^2\int_0^1(1-s)\partial_2^2\mathcal G\big(s\zeta_1,s\zeta_2,s\zeta_3,\sigma,|\xi|\big)ds
					\\
					&+\zeta_3^2\int_0^1(1-s)\partial_1^2\mathcal G\big(s\zeta_1,s\zeta_2,s\zeta_3,\sigma,|\xi|\big)ds+2\zeta_1\zeta_2\int_0^1(1-s)\partial_1\partial_2\mathcal G\big(s\zeta_1,s\zeta_2,s\zeta_3,\sigma,|\xi|\big)ds
					\\
					&+2\zeta_2\zeta_3\int_0^1(1-s)\partial_2\partial_3\mathcal G\big(s\zeta_1,s\zeta_2,s\zeta_3,\sigma,|\xi|\big)ds+2\zeta_3\zeta_1\int_0^1(1-s)\partial_3\partial_1\mathcal G\big(s\zeta_1,s\zeta_2,s\zeta_3,\sigma,|\xi|\big)ds \label{Taylor expansion of nonlinearity}
				\end{split}
				\end{align}
			where we have set for notational convenience
				\begin{align*}
					&\mathcal G_0(\sigma,|\xi|)=\mathcal G\big(0,0,0,\sigma,|\xi|\big),
					\\
					&\mathcal G_j(\sigma,|\xi|)=\partial_j\mathcal G\big(0,0,0,\sigma,|\xi|\big),\quad j=1,2,3.
				\end{align*}
			We prove the estimates \eqref{GlT smallness}-\eqref{GlT LTlip} one order (in $\zeta_1,\zeta_2,\zeta_3$) at a time. 
			
			The zeroth-order term is precisely $\mathcal G_0\big(\sigma,|\xi|\big)$. Explicitly, we have
				$$
					\mathcal G_0\big(\sigma,|\xi|\big)=\frac{20 \left(105-42 |\xi|^2+|\xi|^4\right)}{\left(5-|\xi|^2\right)^{3/2} \left(64\left(5-|\xi|^2\right)+\sigma^2\left(5+3|\xi|^2\right)^2\right)},  \label{zeroth order coefficient}
				$$
			A direct calculation verifies that
				\begin{equation}
					\sup_{\sigma\geq0}\|\mathcal G_0\big(\sigma,|\cdot|\big)\|_{H^k(\mathbb B^7)}<\infty \label{zeroth order estimate}
				\end{equation}
			for all $k\in\mathbb N$. Thus, \eqref{GlT smallness} holds at zeroth-order. In particular, observe that this term does not appear in \eqref{GlT uvlip}. To establish \eqref{GlT LTlip}, we write
				\begin{align}
				\begin{split}
					\lambda^2T^2\mathcal G_0&\big(\lambda Te^{-\tau},|\xi|\big)-\lambda'^2T'^2\mathcal G_0\big(\lambda'T'e^{-\tau},|\xi|\big)
					\\
					=&\big(\lambda^2T^2-\lambda'^2T'^2\big)\mathcal G_0\big(\lambda Te^{-\tau},|\xi|\big)+\lambda'^2T'^2\Big(\mathcal G_0\big(\lambda Te^{-\tau},|\xi|\big)-\mathcal G_0\big(\lambda'T'e^{-\tau},|\xi|\big)\Big). \label{add and subtract}
				\end{split}
				\end{align}
			The desired estimate follows for the first term via \eqref{zeroth order estimate} and the difference of squares formula. For the second term, we use the fundamental theorem of calculus to write
				$$
					\mathcal G_0\big(\lambda Te^{-\tau},|\xi|\big)-\mathcal G_0\big(\lambda'T'e^{-\tau},|\xi|\big)=\int_{\lambda'T'e^{-\tau}}^{\lambda Te^{-\tau}}\partial_1\mathcal G_0\big(\sigma,|\xi|\big)d\sigma.
				$$
			Explicitly, we have
				$$
					\partial_1\mathcal G_0\big(\sigma,|\xi|\big)=-\frac{40 \left(5+3|\xi|^2\right)^2 \left(105-42 |\xi|^2+|\xi|^4\right)\sigma}{\left(5-|\xi|^2\right)^{3/2} \left(64\left(5-|\xi|^2\right)+\sigma^2\left(5+3|\xi|^2\right)^2\right)^2}.
				$$
			A direct calculation verifies that
				\begin{equation}
					\sup_{\sigma\geq0}\|\partial_1\mathcal G_0\big(\sigma,|\cdot|\big)\|_{H^k(\mathbb B^7)}<\infty. \label{zeroth order sigma estimate}
				\end{equation}
			Thus,
				$$
					\Big\|\lambda'^2T'^2\Big(\mathcal G_0\big(\lambda Te^{-\tau},|\cdot|\big)-\mathcal G_0\big(\lambda'T'e^{-\tau},|\cdot|\big)\Big)\Big\|_{H^k(\mathbb B^7)}\lesssim_k(\lambda T+\lambda'T')|\lambda T-\lambda'T'|
				$$
			upon which \eqref{GlT LTlip} follows at zeroth-order.
			
			Plugging in $(\zeta_1,\zeta_2,\zeta_3)=\big(|\xi|u_1(\xi),\xi^j\partial_ju_1(\xi),|\xi|u_2(\xi)\big)$ into \eqref{Taylor expansion of nonlinearity}, we see that the linear terms are
				$$
					|\xi|\mathcal G_1\big(\sigma,|\xi|\big)u_1(\xi)+\mathcal G_2\big(\sigma,|\xi|\big)\xi^j\partial_ju_1(\xi)+|\xi|\mathcal G_3\big(\sigma,|\xi|\big)u_2(\xi).
				$$
			Explicitly, we have
				$$
					|\xi|\mathcal G_1\big(\sigma,|\xi|\big)=-\frac{p_1(|\xi|)+\sigma^2p_2(|\xi|)}{\left(5-|\xi|^2\right)^2\left(64\left(5-|\xi|^2\right)+\sigma ^2\left(5+3|\xi|^2\right)^2 \right)^2},
				$$
				$$
					\mathcal G_2\big(\sigma,|\xi|\big)=\frac{2 \left(35-3|\xi|^2\right) \left(5+3|\xi|^2\right)}{\left(5-|\xi|^2\right) \left(64\left(5-|\xi|^2\right)+\sigma^2\left(5+3|\xi|^2\right)^2 \right)},
				$$
			and
				$$
					|\xi|\mathcal G_3\big(\sigma,|\xi|\big)=\frac{50\left(1-|\xi|^2\right)\left(5+3|\xi|^2\right) }{\left(5-|\xi|^2\right)\left(64\left(5-|\xi|^2\right)+\sigma^2\left(5+3|\xi|^2\right)^2 \right)}
				$$
			where
				$$
					p_1(\rho)=64 \left(27 \rho^8-2240 \rho^6+15550 \rho^4-25000 \rho^2-625\right)
				$$
			and
				$$
					p_2(\rho)=\left(5+3 \rho^2\right)^2 \left(23 \rho^6-45 \rho^4+2325 \rho^2-5375\right).
				$$
			Direct calculations verify that
				$$
					\sup_{\sigma\geq0}\Big(\||\cdot|\mathcal G_1\big(\sigma,|\cdot|\big)\|_{H^k(\mathbb B^7)}+\|\mathcal G_2\big(\sigma,|\cdot|\big)\|_{H^k(\mathbb B^7)}+\||\cdot|\mathcal G_3\big(\sigma,|\cdot|\big)\|_{H^k(\mathbb B^7)}\Big)<\infty
				$$
			for all $k\in\mathbb N$. In particular for $k\geq5$, the estimates \eqref{GlT smallness} and \eqref{GlT uvlip} follow at linear-order from the Banach algebra property of $H^{k-1}(\mathbb B^7)$. The estimate \eqref{GlT LTlip} follows by splitting the corresponding difference as in \eqref{add and subtract}, bounding the first term via the previous bound, and applying the same fundamental theorem of calculus argument applied at zeroth order for the second term combined with the fact that $\mathbf u\in\mathcal B_\delta^k$ and the Banach algebra property of $H^{k-1}(\mathbb B^7)$, $k\geq5$.
			
			For the remaining terms, observe that plugging in $(\zeta_1,\zeta_2,\zeta_3)=(0,0,0)$ yields $0$ (in particular, this is also true for the linear terms but provides no significant simplification). Thus, it suffices to prove \eqref{GlT uvlip}  and \eqref{GlT LTlip} at quadratic (and higher)-order to finish the proof of Proposition \ref{Nonlinear estimates}. 
			
			For notational convenience, we define
				$$
					\mathfrak g_1(\zeta_1,\zeta_2,\zeta_3,\sigma,\xi)=\zeta_1^2\int_0^1(1-s)\partial_1^2\mathcal G\big(s\zeta_1,s\zeta_2,s\zeta_3,\sigma,|\xi|\big)ds.
				$$
			By the fundamental theorem of calculus, we can write
				\begin{align*}
					\mathfrak g_1&\big(|\xi|u_1(\xi),\xi^j\partial_ju_1(\xi),|\xi|u_2(\xi),\sigma,\xi\big)-\mathfrak g_1\big(|\xi|v_1(\xi),\xi^j\partial_jv_1(\xi),|\xi|v_2(\xi),\sigma,\xi\big)
					\\
					&=\int_0^1\nabla\mathfrak g_1\big(\mathbf c(t),\sigma,\xi\big)\cdot\mathbf c'(t)dt
				\end{align*}
			where
				$$
					\mathbf c(t)=\begin{pmatrix}|\xi|u_1(\xi)+t(|\xi|v_1(\xi)-|\xi|u_1(\xi))\\\xi^j\partial_ju_1(\xi)+t(\xi^j\partial_jv_1(\xi)-\xi^j\partial_ju_1(\xi))\\|\xi|u_2(\xi)+t(|\xi|v_2(\xi)-|\xi|u_2(\xi))\end{pmatrix},\quad t\in[0,1]
				$$
			and $\nabla$ denotes the gradient with respect to the first three variables. To prove \eqref{GlT uvlip}, it suffices to prove 
				\begin{equation}
					\sup_{\sigma\geq0}\|\nabla\mathfrak g_1\big(\mathbf c(t),\sigma,\cdot\big)\|_{H^{k-1}(\mathbb B^7)}<\infty \label{goal inequality}
				\end{equation}
			for $t\in[0,1]$.
			
			To prove \eqref{goal inequality}, we first extend the domain of $\nabla\mathfrak g_1$ as follows. Fix three smooth cutoff functions $\chi_1:\R\to[0,1]$, $\chi_2:\mathbb R^7\to[0,1]$, and $\chi_3:\R\to[0,1]$ with the properties that 
				\begin{enumerate}
					\item $\chi_1(\zeta_1)=1$ for $|\zeta_1|\leq\frac{A}{2}$, $\chi_1(\zeta_1)=0$ for $|\zeta_1|\geq\frac{2A}{3}$, $\chi_1$ decreases smoothly in the transition region,
					\item $\chi_2(\xi)=1$ for $|\xi|\leq\frac{3}{2}$, $\chi_2(\xi)=0$ for $|\xi|\geq\frac{5}{3}$, $\chi_2$ decreases smoothly and radially in the transition region, and
					\item $\chi_3(\sigma)=1$ for $-\frac{1}{2}\leq s<\infty$, $\chi_3(\sigma)=0$ for $s\leq-1$, and $\chi_3$ decreases smoothly in the transition region.
				\end{enumerate}
			Now, consider the auxiliary function $\tilde{\mathfrak g_1}:\mathbb R\times\mathbb R\times\mathbb R\times\mathbb R\times\mathbb R^7\to\mathbb R$ by
				$$
					\tilde{\mathfrak g_1}(\zeta_1,\zeta_2,\zeta_3,\sigma,\xi)=\chi_1(\zeta_1)\chi_2(\xi)\chi_3(\sigma)\nabla\mathfrak g_1(\zeta_1,\zeta_2,\zeta_3,\sigma,\xi)
				$$
			for $(\zeta_1,\zeta_2,\zeta_3,\sigma,\xi)\in[-A,A]\times\R\times\R\times[0,\infty)\times\overline{\mathbb B^7_R}$ and $\tilde{\mathfrak g_1}(\zeta_1,\zeta_2,\zeta_3,\sigma,\xi)=0$ otherwise. A direct calculation verifies that $\tilde{\mathfrak g}_1\in C^\infty(\mathbb R\times\mathbb R\times\R\times\R\times\mathbb R^7)$, $\tilde{\mathfrak g}_1(0,0,0,\sigma,\xi)=0$ for all $\sigma\in\mathbb R$ and $\xi\in\mathbb R^7$, and $\partial^\alpha\tilde{\mathfrak g}_1\in L^\infty(\mathbb R\times\mathbb R\times\R\times\R\times\mathbb R^7)$ for all $\alpha\in\mathbb N^{11}_0$ with $|\alpha|\leq k$. By Moser's inequality (see, e.g., Theorem 6.4.1 of \cite{R12}), there exists a continuous function $\gamma:[0,\infty)\to[0,\infty)$ such that
				$$
					\sup_{\sigma\in\mathbb R}\|\tilde{\mathfrak g}_1\big(f_1,\xi^j\partial_jf_1,f_2,\sigma,\cdot\big)\|_{H^{k-1}(\mathbb R^7)}\leq\gamma\big(\|\mathbf f\|_{H^{k}(\mathbb R^7)\times H^{k-1}(\mathbb R^7)}\big)\|\mathbf f\|_{H^{k}(\mathbb R^7)\times H^{k-1}(\mathbb R^7)}
				$$
			for all $\mathbf f=(f_1,f_2)\in H^{k}(\mathbb R^7)\times H^{k-1}(\mathbb R^7)$. Denoting by $\mathcal E$ a Sobolev extension as in Lemma 2.4 of \cite{BDS19}, we then obtain
				$$
					\sup_{\sigma\geq0}\|\nabla\mathfrak g_1\big(\mathbf c(t),\sigma,\cdot\big)\|_{H^{k-1}(\mathbb B^7)}\leq\sup_{\sigma\in\mathbb R}\|\tilde{\mathfrak g}_1\big(\mathcal E\mathbf c(t),\sigma,\cdot\big)\|_{H^{k-1}(\mathbb R^7)}<\infty
				$$
			for all $t\in[0,1]$ and $\mathbf u,\mathbf v\in\mathcal B_\delta^k$. This concludes the proof of \eqref{GlT smallness} and \eqref{GlT uvlip} for this first quadratic term. The same argument with insignificant aesthetic changes yields \eqref{GlT smallness} and \eqref{GlT uvlip} for the remaining quadratic terms.
			
			To obtain \eqref{GlT LTlip} for the first quadratic term, we split the difference
				$$
					\mathfrak g_1\big(|\xi|u_1(\xi),\xi^j\partial_ju_1(\xi),|\xi|u_2(\xi),\lambda Te^{-\tau},\xi\big)-\mathfrak g_1\big(|\xi|u_1(\xi),\xi^j\partial_ju_1(\xi),|\xi|u_2(\xi),\lambda'T'e^{-\tau},\xi\big)
				$$ 
			as in \eqref{add and subtract}, apply the inequality just obtained in the special case $\mathbf v=\mathbf 0$ to the first term in this splitting, and repeat the previous Moser inequality argument for the quantity
				$$
					\partial_\sigma\mathfrak g_1\big(|\xi|u_1(\xi),\xi^j\partial_ju_1(\xi),|\xi|u_2(\xi),\sigma,\xi\big).
				$$
			Again, obtaining \eqref{GlT LTlip} for the remaining quadratic terms follows mutatis mutandis.

		\subsection{Existence of modified strong solutions} \label{Existence of modified strong solutions}
			We turn our attention to solving Equation \eqref{duhamel form}. At first pass, we solve
				\begin{equation}
					\Phi(\tau)=\mathbf S(\tau)\mb u+\int_0^\tau\mathbf S(\tau-s)\Big(\mathbf N\big(\Phi(s)\big)+\mathbf{G}_{\lambda,T}\big(s,\Phi(s)\big)\Big)ds \label{duhamel form general data}
				\end{equation}
			for $\lambda,T>0$ and bounded from above and $\mb u\in\mathcal B_\delta$ for any $\delta\leq\delta_0$ with $\d_0$ small enough so that both Propositions \ref{locally lipschitz nonlinearity estimate} and \ref{lipschitz nonlinearity estimate} hold. This can be achieved by reframing Equation \eqref{duhamel form general data} as a fixed-point problem on the Banach space
				$$
					\mathcal X:=\{\Phi\in C([0,\infty),\mathcal H):\|\Phi\|_\mathcal X:=\sup_{\t>0}e^{\omega\t}\|\Phi(\t)\|_\mathcal H<\infty\}
				$$
			for $\omega>0$ as in Theorem \ref{linear theory}. However, due to $1\in\sigma_p(\mathbf L)$, it is not possible to prove the existence of a fixed point in the space $\mathcal X$ for arbitrary small data $\mb u$. To remedy this, we first consider a modified problem motivated by the Lyapunov-Perron method from dynamical systems theory. Given $\Phi\in\mc X$ and $\mathbf u\in\mc B_\d$, we introduce a correction term
				$$
					\mathbf C_{\lambda,T}(\Phi,\mathbf u):=\mathbf P\bigg(\mathbf u+\int_0^\infty e^{-s}\Big(\mathbf N\big(\Phi(s)\big)+\mathbf{G}_{\lambda,T}\big(s,\Phi(s)\big)\Big)ds\bigg)
				$$
			and consider the modified equation
				\begin{equation}
					\Phi(\t)=\mathbf S(\tau)\big(\mathbf u-\mathbf C_{\lambda,T}(\Phi,\mathbf u)\big)+\int_0^\tau\mathbf S(\tau-s)\Big(\mathbf N\big(\Phi(s)\big)+\mathbf{G}_{\lambda,T}\big(s,\Phi(s)\big)\Big)ds. \label{modified Duhamel}
				\end{equation}
			We will first show that for any $\lambda>0$ sufficiently small and $T>0$ bounded, there exists a unique solution of Equation \eqref{modified Duhamel} within the space $\mathcal X$.
		
			\begin{proposition} \label{mod wp}
				Let $T_\text{max}>0$. For all sufficiently large $c>0$, $\delta\in(0,\delta_0]$ for sufficiently small $\delta_0>0$, the following holds. For any $\mathbf u\in\mathcal H$ satisfying $\|\mathbf u\|_{\mathcal H}\leq\frac{\delta}{c}$, $|\lambda|\leq2\d$, and $T\in(0,T_\text{max})$ there exists a unique solution $\Phi_{\mathbf u,\lambda,T}\in C([0,\infty),\mathcal H)$ of Equation \eqref{modified Duhamel} that satisfies $\|\Phi_{\mathbf u,\lambda,T}(\t)\|_{\mathcal H}\leq\delta e^{-\omega\t}$ for all $\t\geq0$. Furthermore, the map $(\mathbf u,\lambda,T)\mapsto\Phi_{\mathbf u,\lambda,T}$ is Lipschitz from $\mc B_{\delta/c}\times(0,2\delta]\times(0,T_\text{max})$ to $\mathcal X$. More precisely,
					$$
						\|\Phi_{\mathbf u,\lambda,T}-\Phi_{\mathbf v,\lambda',T'}\|_{\mathcal X}\lesssim\|\mathbf u-\mathbf v\|_{\mathcal H}+|\lambda-\lambda'|+\delta|T-T'|
					$$
				for all $\mathbf u,\mathbf v\in\mathcal B_{\delta/c}$, $\lambda,\lambda'\in(0,2\delta]$ and $T,T'\in(0,T_\text{max})$.
			\end{proposition}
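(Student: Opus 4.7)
The plan is a Banach fixed-point argument on the complete metric space $\mathcal X_\delta := \{\Phi\in\mathcal X : \|\Phi\|_\mathcal X \leq \delta\}$ applied to the map $\mathbf K_{\mathbf u,\lambda,T}$ obtained by reading \eqref{modified Duhamel} as $\Phi = \mathbf K_{\mathbf u,\lambda,T}\Phi$. The key structural observation --- and the sole purpose of the Lyapunov--Perron correction $\mathbf C_{\lambda,T}$ --- is that, since $\range\mathbf P = \langle\mathbf g\rangle$ and $\mathbf S(\tau)\mathbf P = e^\tau \mathbf P$, applying $\mathbf P$ to the definition of $\mathbf K_{\mathbf u,\lambda,T}\Phi$ and regrouping exponentials yields
$$
\mathbf P(\mathbf K_{\mathbf u,\lambda,T}\Phi)(\tau) = -\int_\tau^\infty e^{\tau - s}\mathbf P\bigl(\mathbf N(\Phi(s)) + \mathbf G_{\lambda,T}(s,\Phi(s))\bigr)\,ds,
$$
so the contribution from the unstable eigenvalue $1$ is automatically suppressed. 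The $(1-\mathbf P)$-component is controlled using the decay bound $\|\mathbf S(\tau)(1-\mathbf P)\mathbf u\|_\mathcal H \leq Ce^{-\omega \tau}\|(1-\mathbf P)\mathbf u\|_\mathcal H$ from Theorem~\ref{linear theory}. Invoking Propositions~\ref{locally lipschitz nonlinearity estimate} and~\ref{lipschitz nonlinearity estimate} --- which on $\mathcal X_\delta$ yield $\|\mathbf N(\Phi(s))\|_\mathcal H \lesssim \delta^2 e^{-2\omega s}$ and $\|\mathbf G_{\lambda,T}(s,\Phi(s))\|_\mathcal H \lesssim \lambda^2 T^2 e^{-2s} \lesssim \delta^2 e^{-2s}$ (using $|\lambda|\leq 2\delta$ and absorbing $T_{\text{max}}$ into the implicit constant) --- and using $\omega < \tfrac12$, a direct calculation yields
$$
\|\mathbf K_{\mathbf u,\lambda,T}\Phi\|_\mathcal X \leq C_0\Bigl(\tfrac{\delta}{c} + \delta^2\Bigr).
$$
Choosing $c > 2C_0$ first and then shrinking $\delta_0$ gives $\mathbf K_{\mathbf u,\lambda,T}(\mathcal X_\delta) \subseteq \mathcal X_\delta$.

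A parallel calculation using the Lipschitz bounds in Propositions~\ref{locally lipschitz nonlinearity estimate} and~\ref{lipschitz nonlinearity estimate} produces $\|\mathbf K_{\mathbf u,\lambda,T}\Phi - \mathbf K_{\mathbf u,\lambda,T}\Psi\|_\mathcal X \lesssim \delta\|\Phi - \Psi\|_\mathcal X$, which is a contraction for $\delta$ small enough, and Banach's fixed-point theorem then supplies the unique $\Phi_{\mathbf u,\lambda,T}$ with the claimed decay. For the Lipschitz-in-parameters estimate, I subtract the fixed-point identities for $\Phi_{\mathbf u,\lambda,T}$ and $\Phi_{\mathbf v,\lambda',T'}$: the term $\mathbf u - \mathbf v$ feeds through the free evolution, producing the $\|\mathbf u - \mathbf v\|_\mathcal H$ contribution; the differences of $\mathbf N$ and of $\mathbf G_{\lambda,T}$ evaluated at the two fixed points contribute a $\delta$-small multiple of $\|\Phi_{\mathbf u,\lambda,T} - \Phi_{\mathbf v,\lambda',T'}\|_\mathcal X$ that is absorbed on the left; and the parameter difference $\mathbf G_{\lambda,T} - \mathbf G_{\lambda',T'}$ is handled via \eqref{GlT LTlip} together with the elementary bound $|\lambda T - \lambda' T'| \leq T|\lambda - \lambda'| + \lambda'|T - T'| \lesssim |\lambda - \lambda'| + \delta|T - T'|$.

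I expect the main obstacle to be one of bookkeeping rather than genuine analytic difficulty: the constant $c$ must be fixed \emph{before} $\delta_0$ is shrunk so that the smallness $\|\mathbf u\|_\mathcal H \leq \delta/c$ genuinely dominates the universal constants from the semigroup decay in Theorem~\ref{linear theory} and from the definition of $\mathbf C_{\lambda,T}$, and the $\lambda^2 T^2$ scaling in Proposition~\ref{lipschitz nonlinearity estimate} must consistently be converted to $\delta^2$-smallness via $|\lambda|\leq 2\delta$ at every step. Once this accounting is performed, every term carries either an exponential factor $e^{-\omega\tau}$ from the stable semigroup bound or the explicit $e^{\tau - s}$ weight in the unstable integral, and all the requisite integrals converge uniformly in $\tau$.
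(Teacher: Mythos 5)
Your proposal matches the paper's proof essentially line for line: the same fixed-point set $\mathcal X_\delta$, the same map $\mathbf K_{\mathbf u,\lambda,T}$, the same $\mathbf P / (1-\mathbf P)$ splitting exploiting the Lyapunov--Perron correction to kill the $e^\tau$ growth in the $\mathbf P$-component, the same use of $\mathbf N(\mathbf 0)=\mathbf 0$ together with Propositions~\ref{locally lipschitz nonlinearity estimate} and~\ref{lipschitz nonlinearity estimate} for the mapping and contraction bounds, and the same splitting $\|\Phi_{\mathbf u,\lambda,T}-\Phi_{\mathbf v,\lambda',T'}\|_{\mathcal X}\leq\|\mathbf K_{\mathbf u,\lambda,T}(\Phi_{\mathbf u,\lambda,T})-\mathbf K_{\mathbf u,\lambda,T}(\Phi_{\mathbf v,\lambda',T'})\|_{\mathcal X}+\|\mathbf K_{\mathbf u,\lambda,T}(\Phi_{\mathbf v,\lambda',T'})-\mathbf K_{\mathbf v,\lambda',T'}(\Phi_{\mathbf v,\lambda',T'})\|_{\mathcal X}$ with the elementary bound $|\lambda T-\lambda'T'|\lesssim|\lambda-\lambda'|+\delta|T-T'|$ for the parameter Lipschitz estimate. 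No gaps.
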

			\begin{proof}
				Consider the closed ball
					$$
						\mathcal X_\delta:=\{\Phi\in C([0,\infty),\mathcal H):\|\Phi\|_\mathcal X\leq\delta\}
					$$
				and the map
					$$
						\mathbf K_{\mathbf u,\lambda,T}(\Phi)(\t):=\mathbf S(\tau)\big(\mathbf u-\mathbf C_{\lambda,T}(\Phi,\mathbf u)\big)+\int_0^\tau\mathbf S(\tau-s)\Big(\mathbf N\big(\Phi(s)\big)+\mathbf{G}_{\lambda,T}\big(s,\Phi(s)\big)\Big)ds.
					$$
				We show that for each $(\mathbf u,\lambda,T)\in\mathcal B_{\delta/c}\times(0,2\delta]\times(0,T_\text{max})$, the map $\mathbf K_{\mathbf u,\lambda,T}:\mathcal X_\delta\to\mathcal X_\delta$ is a well-defined contraction. 
				
				First, observe that by Theorem \ref{linear theory}, we have
					$$
						\mathbf P\mathbf K_{\mathbf u,\lambda,T}(\Phi)(\t)=-\int_\t^\infty e^{\t-s}\mathbf P\Big(\mathbf N\big(\Phi(s)\big)+\mathbf{G}_{\lambda,T}\big(s,\Phi(s)\big)\Big)ds.
					$$
				From Propositions \ref{locally lipschitz nonlinearity estimate} and \ref{lipschitz nonlinearity estimate} and the fact that $\mathbf N(\mathbf0)=\mathbf0$, we have the estimate
					\begin{align*}
						\|\mathbf P\mathbf K_{\mathbf u,\lambda,T}(\Phi)(\t)\|_{\mathcal H}&\lesssim e^{\t}\int_\t^\infty e^{-s}\Big(\|\Phi(s)\|_{\mathcal H}^2+\lambda^2T^2e^{-2s}\Big)ds
						\\
						&\lesssim e^{\t}\int_\t^\infty e^{-s}\Big(\|\Phi\|_{\mathcal X}^2e^{-2\omega s}+\lambda^2T^2e^{-2s}\Big)ds\lesssim \delta^2e^{-2\omega\t}.
					\end{align*}
				From the fact that $\mathbf P$ is a projection, we have $(1-\mathbf P)\mathbf C_{\lambda,T}(\Phi,\mathbf u)=\mathbf 0$. This implies
					$$
						(1-\mathbf P)\mathbf K_{\mathbf u,\lambda,T}(\Phi)(\t)=\mathbf S(\t)(1-\mathbf P)\mathbf u+\int_{0}^\t\mathbf S(\t-s)(1-\mathbf P)\Big(\mathbf N\big(\Phi(s)\big)+\mathbf{G}_{\lambda,T}\big(s,\Phi(s)\big)\Big)ds.
					$$
				By Theorem \ref{linear theory}, we obtain
					\begin{align*}
						\|(1-\mathbf P)&\mathbf K_{\mathbf u,\lambda,T}(\Phi)(\t)\|_{\mathcal H}
						\\
						&\lesssim e^{-\omega \t}\|(1-\mathbf P)\mathbf u\|_{\mathcal H}+\int_{0}^\t e^{-\omega (\t-s)}\Big(\|\mathbf N\big(\Phi(s)\big)\|_{\mathcal H}+\|\mathbf{G}_{\lambda,T}\big(s,\Phi(s)\big)\|_{\mathcal H}\Big)ds
						\\
						&\lesssim\frac{\delta}{c}e^{-\omega\t}+e^{-\omega\t}\int_{0}^\t e^{\omega s}\Big(\|\Phi(s)\|_{\mathcal H}^2+\lambda^2T^2e^{-2s}\Big)ds
						\\
						&\lesssim\frac{\delta}{c}e^{-\omega\t}+e^{-\omega\t}\int_{0}^\t e^{\omega s}\Big(\|\Phi\|_{\mathcal X}^2e^{-2\omega s}+\lambda^2T^2e^{-2s}\Big)ds
						\\
						&\lesssim\frac{\delta}{c}e^{-\omega\t}+\delta^2e^{-\omega\t}
					\end{align*}
				for all $\t\geq0$. Thus, by taking $\delta_0$ smaller if necessary and $c$ sufficiently large, we can ensure
					$$
						\|\mathbf K_{\mathbf u,\lambda,T}(\Phi)(\t)\|_{\mathcal H}\leq\delta e^{-\omega \t}.
					$$
				Consequently, we see that $\mathbf K_{\mathbf u,\lambda,T}:\mathcal X_\delta\to\mathcal X_\delta$ for all such $\lambda$ and $T$. 
				
				We claim that $\mathbf K_{\mathbf u,\lambda,T}$ is a contraction map. Given $\Phi,\Psi\in\mathcal X_\delta$, observe that
					\begin{align*}
						\mathbf P\mathbf K_{\mathbf u,\lambda,T}(\Phi)(\t)-\mathbf P\mathbf K_{\mathbf u,\lambda,T}(\Psi)(\t)=&-\int_\t^\infty e^{\t-s}\mathbf P\Big(\mathbf N\big(\Phi(s)\big)-\mathbf N\big(\Psi(s)\big)\Big)ds
						\\
						&-\int_\t^\infty e^{\t-s}\mathbf P\Big(\mathbf G_{\lambda,T}\big(s,\Phi(s)\big)-\mathbf G_{\lambda,T}\big(s,\Psi(s)\big)\Big)ds.
					\end{align*}
				By Propositions \ref{locally lipschitz nonlinearity estimate} and \ref{lipschitz nonlinearity estimate}, we have that 
					\begin{align*}
						\|\mathbf P\mathbf K_{\mathbf u,\lambda,T}(\Phi)(\t)&-\mathbf P\mathbf K_{\mathbf u,\lambda,T}(\Psi)(\t)\|_{\mathcal H}	
						\\
						\lesssim&e^{\t}\int_\t^\infty e^{-s}\big(\|\Phi(s)\|_{\mathcal H}+\|\Psi(s)\|_{\mathcal H}\big)\|\Phi(s)-\Psi(s)\|_{\mathcal H}ds
						\\
						&+e^\tau\int_\tau^\infty e^{-s}\lambda^2T^2e^{-2s}\|\Phi(s)-\Psi(s)\|_{\mathcal H}ds
						\\
						\lesssim&\delta\|\Phi-\Psi\|_{\mathcal X}e^{\t}\int_\t^\infty\big(e^{-s-2\omega s}+e^{-3s-\omega s}\big)ds
						\\
						\lesssim&\delta e^{-2\omega\t}\|\Phi-\Psi\|_{\mathcal X}.
					\end{align*}
				Furthermore,
					\begin{align*}
						(1-\mathbf P)\mathbf K_{\mathbf u,\lambda,T}&(\Phi)(\t)-(1-\mathbf P)\mathbf K_{\mathbf u,\lambda,T}(\Psi)(\t)
						\\
						=&\int_0^\t\mathbf S(\t-s)(1-\mathbf P)\Big(\mathbf N\big(\Phi(s)\big)-\mathbf N\big(\Psi(s)\big)\Big)ds
						\\
						&+\int_0^\t\mathbf S(\t-s)(1-\mathbf P)\Big(\mathbf G_{\lambda,T}\big(s,\Phi(s)\big)-\mathbf G_{\lambda,T}\big(s,\Psi(s)\big)\Big)ds.
					\end{align*}
				By Theorem \ref{linear theory} and Propositions \ref{locally lipschitz nonlinearity estimate} and \ref{lipschitz nonlinearity estimate}, we obtain
					\begin{align*}
						\|(1-\mathbf P)\mathbf K_{\mathbf u,\lambda,T}(\Phi)(\t)&-(1-\mathbf P)\mathbf K_{\mathbf u,\lambda,T}(\Psi)(\t)\|_{\mathcal H}
						\\
						\lesssim&\int_0^\t e^{-\omega (\t-s)}\big(\|\Phi(s)\|_{\mathcal H}+\|\Psi(s)\|_{\mathcal H}\big)\|\Phi(s)-\Psi(s)\|_{\mathcal H}ds
						\\
						&+\int_0^\t e^{-\omega (\t-s)}\lambda^2T^2e^{-2s}\|\Phi(s)-\Psi(s)\|_{\mathcal H}ds
						\\
						\lesssim&\delta\|\Phi-\Psi\|_{\mathcal X}e^{-\omega \t}\int_0^\t\big(e^{-\omega s}+e^{-2s}\big)ds
						\lesssim\delta e^{-\omega \t}\|\Phi-\Psi\|_{\mathcal X}.
					\end{align*}
				Thus,
					$$
						\|\mathbf K_{\mathbf u,\lambda,T}(\Phi)-\mathbf K_{\mathbf u,\lambda,T}(\Psi)\|_{\mathcal X}\lesssim\delta\|\Phi-\Psi\|_{\mathcal X}
					$$
				and by considering smaller $\delta_0$ if necessary, we see that $\mathbf K_{\mathbf u,\lambda,T}$ is a contraction on $\mathcal X_\delta$. The Banach fixed point theorem implies the existence of a unique fixed point $\Phi_{\mathbf u,\lambda,T}\in\mathcal X_\delta$ of $\mathbf K_{\mathbf u,\lambda,T}$. 
			
				We now show that the solution map $(\mathbf u,\lambda,T)\mapsto\Phi_{\mathbf u,\lambda,T}$ is Lipschitz. Observe that
					\begin{align*}
						\|\Phi_{\mathbf u,\lambda,T}-&\Phi_{\mathbf v,\lambda',T'}\|_{\mathcal X}=\|\mathbf K_{\mathbf u,\lambda,T}(\Phi_{\mathbf u,\lambda,T})-\mathbf K_{\mathbf v,\lambda',T'}(\Phi_{\mathbf v,\lambda',T'})\|_{\mathcal X}
						\\
						\leq&\|\mathbf K_{\mathbf u,\lambda,T}(\Phi_{\mathbf u,\lambda,T})-\mathbf K_{\mathbf u,\lambda,T}(\Phi_{\mathbf v,\lambda',T'})\|_{\mathcal X}+\|\mathbf K_{\mathbf u,\lambda,T}(\Phi_{\mathbf v,\lambda',T'})-\mathbf K_{\mathbf v,\lambda',T'}(\Phi_{\mathbf v,\lambda',T'})\|_{\mathcal X}
						\\
						\leq&\delta\|\Phi_{\mathbf u,\lambda,T}-\Phi_{\mathbf v,\lambda',T'}\|_\mc X+\|\mathbf K_{\mathbf u,\lambda,T}(\Phi_{\mathbf v,\lambda',T'})-\mathbf K_{\mathbf v,\lambda',T'}(\Phi_{\mathbf v,\lambda',T'})\|_{\mathcal X}.
					\end{align*}
				A direct calculation shows
					\begin{align*}
						\mathbf K_{\mathbf u,\lambda,T}&(\Phi_{\mathbf v,\lambda',T'})(\tau)-\mathbf K_{\mathbf v,\lambda',T'}(\Phi_{\mathbf v,\lambda',T'})(\tau)
						\\
						=&\mathbf S(\t)(1-\mathbf P)(\mathbf u-\mathbf v)-\mathbf Pe^\tau\int_\tau^\infty e^{-s}\Big(\mathbf G_{\lambda,T}\big(s,\Phi_{\mathbf v,\lambda',T'}(s)\big)-\mathbf G_{\lambda',T'}\big(s,\Phi_{\mathbf v,\lambda',T'}(s)\big)\Big)ds
						\\
						&+\int_0^\tau\mathbf S(\tau-s)(1-\mathbf P)\Big(\mathbf G_{\lambda,T}\big(s,\Phi_{\mathbf v,\lambda',T'}(s)\big)-\mathbf G_{\lambda',T'}\big(s,\Phi_{\mathbf v,\lambda',T'}(s)\big)\Big)ds.
					\end{align*}
				Theorem \ref{linear theory} and Proposition \ref{lipschitz nonlinearity estimate} yield
					$$
						\|\mathbf K_{\mathbf u,\lambda,T}(\Phi_{\mathbf v,\lambda',T'})(\tau)-\mathbf K_{\mathbf v,\lambda',T'}(\Phi_{\mathbf v,\lambda',T'})(\tau)\|_{\mathcal H}\lesssim e^{-\omega \t}\big(\|\mathbf u-\mathbf v\|_{\mathcal H}+|\lambda T-\lambda'T'|\big).
					$$
				Thus, we have
					$$
						\|\Phi_{\mathbf u,T}-\Phi_{\mathbf v,T'}\|_{\mathcal X}\lesssim\delta\|\Phi_{\mathbf u,T}-\Phi_{\mathbf v,T'}\|_{\mathcal X}+\|\mathbf u-\mathbf v\|_{\mathcal H}+|\lambda-\lambda'|+\delta|T-T'|.
					$$
				Again, considering smaller $\delta_0$ if necessary yields the result.
			\end{proof}
			Now, instead of evolving arbitrary small initial data, we evolve the initial data $\Phi_0(\mathbf v,T)$ for $\mathbf v\in\mathcal Y$. By taking $T$ sufficiently close to $1$ and $\mathbf v$ sufficiently small in $\mc Y$, we are able to guarantee the necessary smallness of $\Phi_0(\mathbf v,T)$ as follows. 
			\begin{corollary}\label{id operator mod solution}
				Let $M>0$ be sufficiently large, $\d\in(0,\d_0]$ for sufficiently small $\delta_0>0$, and $T_\text{max}>0$. For all $\lambda\in(0,\d]$, $ T\in[1-\frac{\d}{M},1+\frac{\d}{M}]$, $ T'\in(0,T_\text{max})$, and $\mathbf v\in\mathcal Y$ with $\|\mathbf v\|_\mathcal Y\leq\frac{\d}{M}$ there exists a unique solution $\Phi_{\Phi_0(\mathbf v, T),\lambda, T'}\in C([0,\infty),\mathcal H)$ of
					\begin{align}
					\begin{split}
						\Phi(\t)=\mathbf S(\tau)&\Big[\Phi_0(\mathbf v, T)-\mathbf C_{\lambda, T'}\big(\Phi,\Phi_0(\mathbf v, T)\big)\Big]
						\\
						&+\int_0^\tau\mathbf S(\tau-s)\Big(\mathbf N\big(\Phi(s)\big)+\mathbf{G}_{\lambda, T'}\big(s,\Phi(s)\big)\Big)ds \label{T1T2 Duhamel}
					\end{split}
					\end{align}
				that satisfies $\|\Phi_{\Phi_0(\mathbf v, T),\lambda, T'}(\t)\|_{\mathcal H}\leq\delta e^{-\omega\t}$ for all $\t\geq0$. Moreover, 
					\begin{equation}
						\|\Phi_{\Phi_0(\mathbf v_1, T_1),\lambda_1, T_1'}-\Phi_{\Phi_0(\mathbf v_2, T_2),\lambda_2, T_2'}\|_\mathcal X\lesssim\|\mathbf v_1-\mathbf v_2\|_\mathcal Y+|T_1-T_2|+|\lambda_1-\lambda_2|+\delta|T_1'- T_2'| \label{Lipschitz estimate}
					\end{equation}
				for all $\mathbf v_1,\mathbf v_2\in\mathcal Y$ satisfying the smallness condition, $ T_1,T_2\in[1-\frac{\d}{M},1+\frac{\d}{M}]$, $\lambda_1,\lambda_2\in(0,\delta]$, and $T_1',T_2'\in(0,T_\text{max})$.
			\end{corollary}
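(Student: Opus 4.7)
The plan is to reduce the corollary directly to Proposition \ref{mod wp} applied at the specific initial datum $\mathbf u = \Phi_0(\mathbf v, T)$, together with the mapping bounds for the initial data operator in Lemma \ref{id operator}. The role of the constant $M$ is simply to absorb the implicit constants appearing in these two auxiliary results; $M$ will be chosen at the end to be larger than both the constant $c$ from Proposition \ref{mod wp} and the Lipschitz constant in Lemma \ref{id operator}.

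First I would verify the smallness hypothesis needed to invoke Proposition \ref{mod wp}. Since $\Phi_0(\mathbf 0, 1) = \mathcal R(\mathbf 0, 1) + \mathcal R(\mathbf U, 1) - \mathcal R(\mathbf U, 1) = \mathbf 0$, the Lipschitz bound from Lemma \ref{id operator} gives
$$
\|\Phi_0(\mathbf v, T)\|_{\mathcal H} = \|\Phi_0(\mathbf v, T) - \Phi_0(\mathbf 0, 1)\|_{\mathcal H} \lesssim \|\mathbf v\|_{\mathcal Y} + |T - 1| \lesssim \frac{\delta}{M},
$$
whenever $\|\mathbf v\|_{\mathcal Y} \leq \delta/M$ and $T \in [1-\delta/M, 1+\delta/M]$. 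Choosing $M$ sufficiently large (depending only on $c$ and the implicit constant above), this is bounded by $\delta/c$. Since moreover $\lambda \in (0,\delta] \subset (0,2\delta]$ and $T' \in (0, T_{\text{max}})$, the hypotheses of Proposition \ref{mod wp} are satisfied with $\mathbf u = \Phi_0(\mathbf v, T)$. This yields the unique solution $\Phi_{\Phi_0(\mathbf v, T), \lambda, T'} \in C([0,\infty), \mathcal H)$ of Equation \eqref{T1T2 Duhamel} with the exponential decay $\|\Phi_{\Phi_0(\mathbf v, T), \lambda, T'}(\tau)\|_{\mathcal H} \leq \delta e^{-\omega \tau}$.

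For the Lipschitz estimate \eqref{Lipschitz estimate}, I combine the two Lipschitz bounds. The Lipschitz statement of Proposition \ref{mod wp} with initial data $\Phi_0(\mathbf v_1, T_1)$ and $\Phi_0(\mathbf v_2, T_2)$ gives
$$
\|\Phi_{\Phi_0(\mathbf v_1, T_1), \lambda_1, T_1'} - \Phi_{\Phi_0(\mathbf v_2, T_2), \lambda_2, T_2'}\|_{\mathcal X} \lesssim \|\Phi_0(\mathbf v_1, T_1) - \Phi_0(\mathbf v_2, T_2)\|_{\mathcal H} + |\lambda_1 - \lambda_2| + \delta |T_1' - T_2'|.
$$
Applying the Lipschitz bound from Lemma \ref{id operator} to the first term on the right-hand side then yields
$$
\|\Phi_0(\mathbf v_1, T_1) - \Phi_0(\mathbf v_2, T_2)\|_{\mathcal H} \lesssim \|\mathbf v_1 - \mathbf v_2\|_{\mathcal Y} + |T_1 - T_2|,
$$
and combining these two estimates produces precisely \eqref{Lipschitz estimate}.

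There is no substantial obstacle here; the proof is essentially a verification step. The only point that requires attention is the choice of $M$ (and possibly a further shrinking of $\delta_0$) so that the smallness threshold $\delta/c$ from Proposition \ref{mod wp} is met by $\Phi_0(\mathbf v, T)$ uniformly over the allowed ranges of $\mathbf v$ and $T$. Since the implicit constant in Lemma \ref{id operator} is a fixed constant, picking $M$ larger than $c$ times this constant suffices, and the rest follows by direct substitution.
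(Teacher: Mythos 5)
Your proposal is correct and takes essentially the same approach as the paper: invoke Lemma \ref{id operator} to bound $\|\Phi_0(\mathbf v,T)\|_{\mathcal H}$ by $\delta/M\lesssim\delta/c$ and then apply Proposition \ref{mod wp} for existence and the Lipschitz estimate. The only cosmetic difference is that you derive the smallness of $\Phi_0(\mathbf v,T)$ from the Lipschitz bound in Lemma \ref{id operator} together with $\Phi_0(\mathbf 0,1)=\mathbf 0$, while the paper directly cites the third conclusion of that lemma; these are equivalent.
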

			\begin{proof}
				From Lemma \ref{id operator}, we have that $\|\Phi_0(\mathbf v,T)\|_\mc H\lesssim\frac{\delta}{M}$. For $M$ sufficiently large, we can ensure $\|\Phi_0(\mathbf v,T)\|_\mc H\leq\frac{\delta}{c}$. Proposition \ref{mod wp} then yields the first claim. The second claim follows from Lipschitz continuity of the data to solution map from Proposition \ref{mod wp} and Lipschitz continuity of the initial data operator from Lemma \ref{id operator}.
			\end{proof}
			
		\subsection{Variation of the blowup time}\label{Reconnecting to the Physical Problem}
			By taking $T=T'\in[1-\frac{\d}{C},1+\frac{\d}{C}]$ in Equation \eqref{T1T2 Duhamel} and $C>0$ sufficiently large, we are able to show that there exists a unique $T\in[1-\frac{\d}{C},1+\frac{\d}{C}]$ such that the corresponding solution $\Phi_{\Phi_0(\mathbf v,T),\lambda,T}$ of Equation \eqref{T1T2 Duhamel} is in fact the solution of Equation \eqref{duhamel form}.
			
			\begin{lemma} \label{correction is zero}
				Let $C>0$ be sufficiently large and $\d\in(0,\d_0]$ for sufficiently small $\delta_0>0$. For all $\lambda\in(0,\d]$ and $\mathbf v\in\mathcal Y$ with $\|\mb v\|_\mc Y\leq\frac{\d}{C^2}$, there exists a unique $T\in[1-\frac{\d}{C},1+\frac{\d}{C}]$ and a unique $\Phi\in\mathcal X_\d$ which satisfies
					\begin{equation}
						\Phi(\t)=\mathbf S(\t)\Phi_0(\mathbf v,T)+\int_0^\t\mathbf S(\t-s)\Big(\mathbf N\big(\Phi(s)\big)+\mathbf{G}_{\lambda,T}\big(s,\Phi(s)\big)\Big)ds \label{duhamel with id operator}
					\end{equation}
				for all $\t>0$. Moreover, $T$ depends Lipschitz continuously on $\lambda$ and $\mathbf v$, i.e.,
					$$
						|T(\lambda,\mb v)-T(\lambda',\mb w)|\lesssim\|\mb v-\mb w\|_\mc Y+|\lambda-\lambda'|
					$$
				for all $\mb v,\mb w\in\mc Y$ and $\lambda,\lambda'\in(0,\d]$ as above.
			\end{lemma}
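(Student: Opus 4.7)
My plan is to exhibit $T$ as the unique solution of a scalar fixed-point equation that forces the correction term produced by Corollary \ref{id operator mod solution} to vanish. Taking $T'=T$ in \eqref{T1T2 Duhamel}, the solution $\Phi_{\Phi_0(\mathbf v, T),\lambda,T}$ given by that corollary also solves \eqref{duhamel with id operator} if and only if
$$
\mathbf C_{\lambda,T}\big(\Phi_{\Phi_0(\mathbf v,T),\lambda,T},\Phi_0(\mathbf v,T)\big)=\mathbf 0.
$$
Because $\range \mathbf P=\langle\mathbf g\rangle$ is one-dimensional (Proposition \ref{spectrum of evolution}), this is a single scalar condition. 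Fixing any continuous linear functional $\ell\in\mathcal H^{\ast}$ with $\ell(\mathbf g)=1$, I would set
$$
F(\lambda,\mathbf v,T):=\ell\!\Big(\mathbf C_{\lambda,T}\big(\Phi_{\Phi_0(\mathbf v,T),\lambda,T},\Phi_0(\mathbf v,T)\big)\Big),
$$
and seek to solve $F(\lambda,\mathbf v,T)=0$ for $T$ as a Lipschitz function of $(\lambda,\mathbf v)$.

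The second step is to isolate the dominant $T$-dependence of $F$. Using $\mathbf P\mathbf S(\tau)=e^{\tau}\mathbf P$ from Theorem \ref{linear theory}, together with Propositions \ref{locally lipschitz nonlinearity estimate} and \ref{lipschitz nonlinearity estimate} and the exponential decay of $\Phi_{\Phi_0(\mathbf v,T),\lambda,T}$, the nonlinear and forcing contributions inside $\mathbf C_{\lambda,T}$ are bounded by $O(\delta^2)$ uniformly in $T$ and depend Lipschitz-continuously on $T$ with constant $O(\delta)$ via \eqref{Lipschitz estimate}. Hence the principal part of $F$ is $\ell(\Phi_0(\mathbf v,T))$. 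Writing $\Phi_0(\mathbf v,T)=\mathcal R(\mathbf v,T)+[\mathcal R(\mathbf U,T)-\mathcal R(\mathbf U,1)]$, Lemma \ref{id operator} gives $|\ell(\mathcal R(\mathbf v,T))|\lesssim\|\mathbf v\|_{\mathcal Y}\leq\delta/C^2$. Moreover, the profile family $T\mapsto\mathcal R(\mathbf U,T)$ is smooth and its derivative at $T=1$ agrees, up to a multiplicative constant, with the eigenfunction $\mathbf g$, since this derivative is precisely the infinitesimal generator of the time-translation symmetry of Equation \eqref{sf skyrme eqn} responsible for the eigenvalue $1$. Consequently
$$
\mathcal R(\mathbf U,T)-\mathcal R(\mathbf U,1)=c_0(T-1)\mathbf g+O\big((T-1)^2\big)\quad\text{in }\mathcal H
$$
for some $c_0\neq 0$, and after absorbing $c_0$ into $\ell$ the equation $F=0$ becomes
$$
T-1=E(\lambda,\mathbf v,T),\qquad |E|\lesssim \delta/C^2+\delta^2,
$$
with $T$-Lipschitz constant $O(\delta)$.

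Given this reduction, I would run a Banach fixed-point argument on $[1-\delta/C,1+\delta/C]$ for the map $T\mapsto 1+E(\lambda,\mathbf v,T)$: for $C$ sufficiently large the map sends the interval into itself, and for $\delta_0$ sufficiently small it is a contraction, producing a unique $T=T(\lambda,\mathbf v)$. The associated $\Phi:=\Phi_{\Phi_0(\mathbf v,T),\lambda,T}$ then solves \eqref{duhamel with id operator} and lies in $\mathcal X_\delta$ by Corollary \ref{id operator mod solution}. Uniqueness of $\Phi$ within $\mathcal X_\delta$ follows because any solution of \eqref{duhamel with id operator} automatically solves \eqref{T1T2 Duhamel} with zero correction, so it must coincide with $\Phi_{\Phi_0(\mathbf v,T),\lambda,T}$ by the uniqueness in Corollary \ref{id operator mod solution}. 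Lipschitz dependence of $T$ on $(\lambda,\mathbf v)$ is obtained by subtracting the fixed-point identities for $(\lambda,\mathbf v)$ and $(\lambda',\mathbf w)$ and applying \eqref{Lipschitz estimate} together with the Lipschitz bound for $\Phi_0$.

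The main obstacle is confirming the nonvanishing of the leading coefficient $c_0$, i.e.\ the identification of $\partial_T\mathcal R(\mathbf U,T)|_{T=1}$ with a nonzero multiple of $\mathbf g$. This rests on the spectral picture of $\mathbf L$ from \cite{CMS23}: since $1$ is a simple eigenvalue generated precisely by time translation of the strong field model, the tangent vector to the profile family at $T=1$ must lie in the one-dimensional eigenspace $\langle\mathbf g\rangle$ and is easily seen to be nonzero by inspection of $(1+\Lambda)\tilde U$ and $(2+\Lambda)(1+\Lambda)\tilde U$. Once $c_0\neq 0$ is in hand, all the remaining error terms are comfortably smaller than the interval half-width $\delta/C$ provided $C$ is taken large and $\delta$ small, closing the contraction.
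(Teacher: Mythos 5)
Your proposal is correct and follows essentially the same route as the paper: reduce the condition $\mathbf C_{\lambda,T}(\Phi_{\Phi_0(\mathbf v,T),\lambda,T},\Phi_0(\mathbf v,T))=\mathbf 0$ to a scalar equation using $\range\mathbf P=\langle\mathbf g\rangle$, extract the leading linear-in-$T$ term from $\mathcal R(\mathbf U,T)-\mathcal R(\mathbf U,1)=\kappa(T-1)\mathbf g+O((T-1)^2)$ with $\kappa\neq 0$, and close via a Banach fixed-point argument on $[1-\delta/C,1+\delta/C]$, with Lipschitz dependence of $T$ obtained by subtracting the two fixed-point identities. The only cosmetic difference is that the paper tests against $\mathbf g$ via the $\mathcal H$-inner product rather than an abstract functional $\ell$, and is terser about why $\kappa\neq 0$, which you justify correctly via the time-translation origin of the eigenvalue $1$.
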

			\begin{proof}
				Take $C>M$ sufficiently large. For convenience of notation, we write
					$$
						\Phi_{\mathbf v,\lambda,T}=\Phi_{\Phi_0(\mathbf v,T),\lambda,T}
					$$
				to denote the solution obtained as in Corollary \ref{id operator mod solution}. The claim follows from the existence of a unique $T\in[1-\frac{\d}{C},1+\frac{\d}{C}]$ such that 
					$$
						\mathbf C_{\mathbf v,\lambda,T}:=\mathbf C_{\lambda,T}\big(\Phi_{\mathbf v,\lambda,T},\Phi_0(\mathbf v,T)\big)=\mathbf 0.
					$$ 
				Since $\rg\mathbf P=\langle\mathbf g\rangle$, this is equivalent to
					\begin{equation}
						\big(\mathbf C_{\mathbf v,\lambda,T}|\mathbf g\big)_\mathcal H=0. \label{inner prod is zero}
					\end{equation}
				We proceed by separating the linear and higher-order terms (in $T$) in Equation \eqref{inner prod is zero} and solving this equation using the Banach fixed point theorem. 
				
				By Taylor expansion, we have
					$$
						\mc R(\mb U,T)-\mc R(\mb U,1)=\kappa(T-1)\mathbf g+\mb R(T)
					$$
				for some $\kappa\in\mathbb R\setminus\{0\}$ and $\mb R(T)$ denoting the second-order remainder term and satisfying
					$$
						\|\mb R( T)-\mb R( T')\|_\mc H\lesssim\frac{\delta}{C}| T- T'|.
					$$
				With this, we write the initial data operator as
					$$
						\Phi_0(\mb v,T)=\mc R(\mb v,T)+\kappa(T-1)\mathbf g+\mb R(T).
					$$
				We write $T=1+\beta$ for $\beta\in[-\frac{\d}{C},\frac{\d}{C}]$ and define the following quantity
					$$
						\mb\Sigma_{\mb v,\lambda}(\beta):=\mb P\mc R(\mb v,1+\beta)+\mb P\mb R(1+\beta)+\mb P\mb I_{\mathbf v,\lambda}(\beta)
					$$
				where
					$$
						\mb I_{\mb v,\lambda}(\beta):=\int_0^\infty e^{-s}\Big(\mb N(\Phi_{\mathbf v,\lambda,1+\beta}(s))+\mathbf G_{\lambda,1+\beta}\big(s,\Phi_{\mathbf v,\lambda,1+\beta}(s)\big)\Big)ds.
					$$
				Thus, Equation \eqref{inner prod is zero} is equivalent to
					$$
						\beta=\Sigma_{\mb v,\lambda}(\beta)=\tilde\kappa(\mb\Sigma_{\mb v,\lambda}(\beta)|\mathbf g)_\mc H
					$$
				for some $\tilde\kappa\in\mathbb R\setminus\{0\}$. We aim to show that $\Sigma_{\mb v,\lambda}:[-\frac{\d}{C},\frac{\d}{C}]\to[-\frac{\d}{C},\frac{\d}{C}]$ is a contraction map.
			
				Direct calculation shows that
					$$
						\Sigma_{\mb v,\lambda}(\beta)=O\Big(\frac{\d}{C^2}\Big)+O(\delta^2).
					$$
				Thus, for $C>0$ sufficiently large and $\d_0>0$ sufficiently small depending on $C$, we obtain the bound $|\Sigma_{\mb v,\lambda}|\leq\frac{\d}{C}$. To see that it is a contraction, let $\beta_1,\beta_2\in[-\frac{\d}{C},\frac{\d}{C}]$ and denote by $\Phi\in\mc X_\d$ the solution corresponding to $T_1=1+\beta_1$ and by $\Psi\in\mc X_\d$ the solution corresponding to $T_2'=1+\beta_2$ with $\lambda$ common. Inequality \eqref{Lipschitz estimate} yields
					$$
						\|\Phi-\Psi\|_\mc X\lesssim\|\Phi_0(\mathbf v, T_1)-\Phi_0(\mathbf v, T_2)\|_\mc H\lesssim|\beta_1-\beta_2|.
					$$
				Thus, Proposition \ref{locally lipschitz nonlinearity estimate} yields 
					$$
						\|\mb P\mb I_{\mathbf v,\lambda}(\beta_1)-\mb P\mb I_{\mathbf v,\lambda}(\beta_2)\|_\mc H\lesssim\d|\beta_1-\beta_2|.
					$$
				Since $\mb P\in\mc B(\mc H)$, we obtain 
					$$
						|\Sigma_{\mb v,\lambda}(\beta_1)-\Sigma_{\mb v,\lambda}(\beta_2)|\lesssim\d|\beta_1-\beta_2|.
					$$
				Upon taking $\d_0>0$ smaller if necessary, we see that $\Sigma_{\mb v,\lambda}$ is a contraction. Thus, the Banach fixed point theorem implies the existence of a unique $\beta\in[-\frac{\d}{C},\frac{\d}{C}]$ such that $\mathbf C_{\mathbf v,\lambda,T}=\mathbf 0$ with $T=1+\beta$.
			
				Now, we show that the corresponding $T$ obtained depends Lipschitz continuously on $\mathbf v$ and $\lambda$. For $\mb v,\mb w\in\mc Y$ and $\lambda,\lambda'$ satisfying the smallness assumptions, denote by $\beta_{\mb v,\lambda}$ and $\beta_{\mb w,\lambda'}$ the unique parameters obtained as above and $T_{\mb v,\lambda}=1+\beta_{\mb v,\lambda}$, $T_{\mb w,\lambda}=1+\beta_{\mb w,\lambda'}$. We write
					\begin{align*}
						|\beta_{\mb v,\lambda}-\beta_{\mb w,\lambda'}|=&|\Sigma_{\mb v,\lambda}(\beta_{\mb v,\lambda})-\Sigma_{\mb w,\lambda'}(\beta_{\mb w,\lambda'})|
						\\
						\leq&|\Sigma_{\mb v,\lambda}(\beta_{\mb v,\lambda})-\Sigma_{\mb v,\lambda}(\beta_{\mb w,\lambda'})|+|\Sigma_{\mb v,\lambda}(\beta_{\mb w,\lambda'})-\Sigma_{\mb w,\lambda'}(\beta_{\mb w,\lambda'})|
						\\
						\lesssim&\delta|\beta_{\mb v,\lambda}-\beta_{\mb w,\lambda'}|+|\Sigma_{\mb v,\lambda}(\beta_{\mb w,\lambda'})-\Sigma_{\mb w,\lambda'}(\beta_{\mb w,\lambda'})|.
					\end{align*}
				For the second term, we have 
					\begin{align*}
						|\Sigma_{\mb v,\lambda}(\beta_{\mb w,\lambda'})-\Sigma_{\mb w,\lambda'}(\beta_{\mb w,\lambda'})|&\lesssim\|\mc R(\mb v,T_{\mb w,\lambda})-\mc R(\mb w,T_{\mb w,\lambda})\|_\mc H+\|\mb I_{\mb v,\lambda}(\beta_{\mb w,\lambda})-\mb I_{\mb w,\lambda'}(\beta_{\mb w,\lambda})\|_\mc H
						\\
						&\lesssim\|\mb v-\mb w\|_\mc Y+|\lambda-\lambda'|.
					\end{align*}
				By taking $\d_0>0$ smaller if necessary, we obtain the desired Lipschitz dependence.
			\end{proof}
	
	We now show that the solution obtained in Lemma \ref{correction is zero} is a classical solution.
			\begin{proposition} \label{classical soln}
				Let $\d_0>0$ and $C>0$ be as in Lemma \ref{correction is zero}, $\d\in(0,\d_0]$, and $\mb v\in\mc Y$ such that $\|\mb v\|_{\mc Y}\leq\frac{\d}{C^2}$. Then the unique solution $\Phi$ of Equation \eqref{duhamel with id operator} belongs to $C^2([0,\infty)\times\mathbb B^7)\times C^1([0,\infty)\times\mathbb B^7)$ and solves Equation \eqref{abstract ivp} classically.
			\end{proposition}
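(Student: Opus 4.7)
The plan is to bootstrap the solution $\Phi=(\varphi_1,\varphi_2)$ to the higher-regularity class $C([0,\infty),\mc H^6)$, extract pointwise spatial regularity via Sobolev embedding, and then upgrade to joint space-time regularity by invoking the strong-continuity of $(\mb S(\tau))_{\tau\geq0}$ and reading off time derivatives from the first-order system \eqref{first order eqn}.

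For the bootstrap, Lemma \ref{id operator} applied at $k=6$ (using the hypothesis $\|\mb v\|_\mc Y\leq\d/C^2$ with $\mc Y=\mc Y^5$) already gives $\Phi_0(\mb v,T)\in\mc H^6$ with a bound of order unity. The resolvent analysis of \cite{CMS23} underlying Theorem \ref{linear theory} extends to $\mc H^6$, so the same contraction-mapping scheme used in Proposition \ref{mod wp} can be set up locally in time on $\mc H^6$. The key input is a Moser-type tame bound of the form
$$\|\mb N(\Phi)\|_{\mc H^6}+\|\mb G_{\lambda,T}(\tau,\Phi)\|_{\mc H^6}\lesssim\gamma\big(\|\Phi\|_{L^\infty\times L^\infty}\big)\big(1+\|\Phi\|_{\mc H^6}\big),$$
which follows from the same Moser-inequality argument carried out in the proof of Proposition \ref{lipschitz nonlinearity estimate}, but with the nonlinear constant depending only on $\|\Phi\|_{L^\infty\times L^\infty}$ rather than on the full $\mc H^6$ norm. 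This produces a unique local $\mc H^6$-solution $\tilde\Phi$ which, by uniqueness in $\mc H$ established in Lemma \ref{correction is zero}, coincides with $\Phi$ on its interval of existence. Since the exponential decay $\|\Phi(\tau)\|_{\mc H}\leq\d e^{-\omega\tau}$ controls $\|\Phi(\tau)\|_{L^\infty\times L^\infty}$ uniformly by Sobolev embedding, a Gr\"onwall argument applied to $\|\Phi(\tau)\|_{\mc H^6}$ rules out finite-time blow-up, yielding $\Phi\in C([0,\infty),\mc H^6)$.

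Since $6-\tfrac{7}{2}=\tfrac{5}{2}>2$ and $5-\tfrac{7}{2}=\tfrac{3}{2}>1$, Sobolev embedding gives $\varphi_1(\tau,\cdot)\in C^2(\overline{\B^7})$ and $\varphi_2(\tau,\cdot)\in C^1(\overline{\B^7})$, jointly continuous in $\tau$. For time regularity, the inclusion $\mc H^6\subset\mc D(\mb L)$ (the operator $\mb L$ loses at most two derivatives, so $\mb L:\mc H^6\to\mc H$ is bounded and $C^\infty_\text{rad}\times C^\infty_\text{rad}\subset\mc H^6$ is a core) together with the fact that $\tau\mapsto\mb N(\Phi(\tau))+\mb G_{\lambda,T}(\tau,\Phi(\tau))$ is continuous into $\mc H^6\subset\mc D(\mb L)$ allows me to invoke the standard semigroup result that a mild solution with initial data in $\mc D(\mb L)$ and inhomogeneity continuous into $\mc D(\mb L)$ is in fact a strong solution. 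This promotes the solution of Equation \eqref{duhamel with id operator} to $\Phi\in C^1([0,\infty),\mc H)$ satisfying Equation \eqref{abstract ivp} classically. Pointwise $C^1$-regularity in $\tau$ of $\varphi_2$ and $C^2$-regularity in $\tau$ of $\varphi_1$ are then read directly off \eqref{first order eqn}: $\partial_\tau\varphi_1=-\rho\partial_\rho\varphi_1-\varphi_1+\varphi_2$ is continuous by the spatial regularity, $\partial_\tau\varphi_2$ is continuous since the right-hand side of the second equation involves only $\Delta\varphi_1$, $\partial_\rho\varphi_2$, and smooth functions of $\varphi_1,\varphi_2$, and differentiating $\partial_\tau\varphi_1$ once more in $\tau$ and substituting yields continuity of $\partial_\tau^2\varphi_1$ and of the mixed derivative $\partial_\tau\partial_\rho\varphi_1$.

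The main obstacle is the tame estimate in the bootstrap step: Propositions \ref{locally lipschitz nonlinearity estimate} and \ref{lipschitz nonlinearity estimate} are stated under $\mc H^k$-smallness, while here the solution is small only in $\mc H=\mc H^5$ and merely bounded in $\mc H^6$. The required replacement is standard and follows from making the $L^\infty$-dependence in the Moser step explicit, but it is the one ingredient that must be extracted from, rather than directly quoted from, the prior nonlinear-estimates section.
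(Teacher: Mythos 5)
Your proposal is correct and follows essentially the same route as the paper: upgrade to a global $\mathcal H^6$ solution via local well-posedness in $\mathcal H^6$, uniqueness, and a Gr\"onwall argument (the paper likewise needs the tame/linear-in-$\mathcal H^6$ form of the nonlinear bounds you flag, with smallness only in $\mathcal H^5$), then Sobolev embedding for spatial regularity and semigroup regularity theory with data in $\mathcal D(\mathbf L)$ to pass to a classical solution, reading the equation pointwise at the end. The only cosmetic difference is that the paper invokes Pazy's semilinear existence/classical-solution theorems directly rather than the inhomogeneous linear result with $\mathcal D(\mathbf L)$-valued forcing, which is the same machinery.
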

			\begin{proof}
				For $\mathbf v$ as stated, denote by $T$ and $\Phi$ the unique parameter and solution of Equation \eqref{duhamel with id operator}. In particular, recall that $\Phi\in C([0,\infty),\mc H)$
				
				A standard fixed point argument, i.e. Theorem 1.4 of \cite{P83}, pg. 185, yields a unique local solution of Equation \eqref{duhamel with id operator} in $\mathcal H^6$. By uniqueness, this solution is precisely the global solution of Equation \eqref{duhamel with id operator} in $\mathcal H$ from Lemma \ref{correction is zero} on its interval of existence. Thus, we have $\Phi\in C([0,\mc T],\mc H^6)$ with $\mc T>0$ denoting the lifespan of the solution $\Phi$ in $\mc H^6$. Moreover, by Theorem 1.4 of \cite{P83}, this local solution is in fact global in $\mathcal H^6$ so long as $\lim_{\tau\to\mathcal T^-}\|\Phi(\tau)\|_{\mathcal H^6}<\infty$. From Equation \eqref{duhamel with id operator} and Propositions \ref{locally lipschitz nonlinearity estimate} and \ref{lipschitz nonlinearity estimate}, it follows that
					$$
						\|\Phi(\t)\|_{\mathcal H^6}\lesssim1+\int_0^\t\|\Phi(s)\|_{\mathcal H^6}ds
					$$
				for all $\t\in[0,\mc T]$. Gr\"onwall's inequality then implies $\|\Phi(\t)\|_{\mc H^6}\leq C_1e^{C_2\mc T}$ for all $\t\in[0,\mc T]$ and for some $C_1,C_2>0$. Thus, it must hold that $\mc T=\infty$. In particular, Sobolev embedding yields $\Phi(\t)\in C^2(\overline{\mathbb B^7})\times C^1(\overline{\mathbb B^7})$ for all $\t\geq0$. 
				
				To prove regularity in $\t$, we first note that $\Phi_0(\mathbf v, T)\in\mathcal D(\mathbf L)$. Theorem 1.5 of \cite{P83} implies that $\Phi$ is a classical solution, i.e., 
					$$
						\Phi\in C([0,\infty),\mathcal H^6)\cap C^1((0,\infty),\mathcal H^{6}),\quad\Phi(\tau)\in\mathcal D(\mathbf L),\;\tau\geq0
					$$
				and solves
					\begin{equation}
					\begin{cases}
						\partial_\t\Phi(\t)=\mathbf L\Phi(\t)+\mathbf N(\Phi(\t))+\mathbf G_{\lambda,T}(\tau,\Phi(\t)) \label{classical eqn_}
						\\
						\Phi(0)=\Phi_0(\mathbf v,T)
					\end{cases}
					\end{equation}
				for $\t>0$. In particular, $\partial_\tau\Phi(\tau)\in \mathcal H^6\hookrightarrow C^2(\overline{\mathbb B^7})\times C^1(\overline{\mathbb B^7})$ for $\tau>0$. Furthermore, the embedding $\mathcal H^6\hookrightarrow L^\infty(\mathbb B^7)\times L^\infty(\mathbb B^7)$ implies that Equation \eqref{classical eqn_} holds pointwise on $\mathbb B^7$. By a generalized version of Schwarz' theorem (see e.g. Theorem 9.41 on p. 235 of \cite{R76}), we can exchange $\t$-derivatives and $\xi$-derivatives upon which the claim follows.
			\end{proof}

	\subsection{Proof of the main result}\label{Proof of the main result}
		\begin{proof}[Proof of Theorem \ref{main result}]
			Let $\delta_0>0$ be sufficiently small as in Lemma \ref{correction is zero}. Let $\d\in(0,\d_0]$ and $C>0$ be as in Lemma \ref{correction is zero}. Let $\lambda\in(0,\d]$, and set $\d':=\frac{\d}{C}$. Furthermore, let $(f,g)\in H^6(\mathbb B_{2}^7)\times H^5(\mathbb B_{2}^7)$ satisfy
				$$
					\big\|\big(f,g\big)\big\|_{H^6(\mathbb B^7_{2})\times H^5(\mathbb B^7_{2})}\leq\frac{\d'}{C}.
				$$
			Then $\mathbf v:=(f,g)$ satisfies the hypotheses of Lemma \ref{correction is zero} and Proposition \ref{classical soln}. Thus, there is a unique $T\in[1-\d',1+\d']$ depending Lipschitz continuously on $\mathbf v$ and $\lambda$ so that Equation \eqref{duhamel with id operator} has the unique classical solution $\Phi=(\varphi_1,\varphi_2)\in C^2([0,\infty)\times\overline{\mathbb B^7})\times C^1([0,\infty)\times\overline{\mathbb B^7})$ with $\Phi\in\mathcal X_{\d'}$. Now, set
				\begin{align*}
					u(t,r):=&\frac{1}{T-t}\bigg[\tilde U\Big(\frac{r}{T-t}\Big)+\varphi\Big(t,\frac{r}{T-t}\Big)\bigg]
				\end{align*}
			with
				$$
					\varphi\Big(t,\frac{r}{T-t}\Big):=\varphi_1\bigg(\log\Big(\frac{T}{T-t}\Big),\frac{r}{T-t}\bigg).
				$$
			By Proposition \ref{classical soln} and the fact that similarity coordinates define a diffeomorphism of the backwards light cone into the infinite cylinder, we have that $u\in C_\text{rad}^2(\mathfrak C_T)$. Furthermore, according to Proposition \ref{correction is zero} and the calculations carried out in Section \ref{First-order formulation}, $u$ is indeed the unique solution of Equation \eqref{semilinear skyrme eqn} on $\mathfrak C_T$ satisfying the initial conditions
				$$
					u(0,r)=\tilde U(r)+f(r)
				$$
			and
				$$
					\partial_tu(0,r)=(1+\Lambda)\tilde U(r)+g(r).
				$$
			Lipschitz dependence on $(f,g)$ and $\lambda$ follows from that of $\Phi$ on $\mathbf v$ and $\lambda$ respectively. Finally, the decay estimate \eqref{convergence} follows from $\Phi\in\mathcal X_{\d'}$.
		\end{proof}

\bibliographystyle{plain}
\bibliography{bibfile}

\begin{thebibliography}{10}

\bibitem{BDS19}
Pawe{\l} Biernat, Roland Donninger, and Birgit Sch{\"o}rkhuber.
\newblock Hyperboloidal similarity coordinates and a globally stable blowup
  profile for supercritical wave maps.
\newblock {\em International Mathematics Research Notices},
  2021(21):16530--16591, 2019.

\bibitem{BCR07}
Piotr Bizon, Andrzej Rostworowski, and Tadeusz Chmaj.
\newblock {Asymptotic stability of the Skyrmion}.
\newblock {\em Physical Review. D, Particles Fields}, 75(12), 6 2007.

\bibitem{CMS23}
Po-Ning Chen, Michael McNulty, and Birgit Sch{\"o}rkhuber.
\newblock Singularity formation for the higher dimensional {Skyrme} model in
  the strong field limit.
\newblock {\em arXiv e-prints}, 2310.07042, 2023.

\bibitem{C13}
Matthew Creek.
\newblock {Large-data global well-posedness for the $1+2$-dimensional
  equivariant Faddeev model}.
\newblock {\em arXiv e-prints}, 1310.4708, 2013.

\bibitem{CDSS16}
Matthew Creek, Roland Donninger, Wilhelm Schlag, and Stanley Snelson.
\newblock Linear stability of the {Skyrmion}.
\newblock {\em International Mathematics Research Notices}, 2017(8):2497--2537,
  2016.

\bibitem{D23}
Roland Donninger.
\newblock Spectral theory and self-similar blowup in wave equations.
\newblock {\em arXiv e-prints}, 2310.12016, 2023.

\bibitem{GDs13}
Dan-Andrei Geba and Daniel Da~Silva.
\newblock On the regularity of the $2 + 1$ dimensional equivariant {Skyrme}
  model.
\newblock {\em Proceedings of the American Mathematical Society},
  141(6):2105--2115, 2013.

\bibitem{GG17}
Dan-Andrei Geba and Manoussos Grillakis.
\newblock {Large data global regularity for the $2+1$-dimensional equivariant
  Faddeev model}.
\newblock {\em Differential and Integral Equations}, 32, 08 2017.

\bibitem{GG18}
Dan-Andrei Geba and Manoussos~G Grillakis.
\newblock Large data global regularity for the classical equivariant {Skyrme}
  model.
\newblock {\em Discrete and Continuous Dynamical Systems}, 38(11):5537--5576,
  2018.

\bibitem{GNR11}
Dan-Andrei Geba, Kenji Nakanishi, and Sarada Rajeev.
\newblock Global well-posedness and scattering for {Skyrme} wave maps.
\newblock {\em Communications on Pure and Applied Analysis}, 11, 06 2011.

\bibitem{GNZ15}
Dan-Andrei Geba, Kenji Nakanishi, and Xiang Zhang.
\newblock {Sharp global regularity for the $2 + 1$-dimensional equivariant
  Faddeev model}.
\newblock {\em International Mathematics Research Notices},
  2015(22):11549--11565, 02 2015.

\bibitem{GL60}
M.~Gell-Mann and M.~L{\'e}vy.
\newblock The axial vector current in beta decay.
\newblock {\em Il Nuovo Cimento}, 16(4):705--726, 1960.

\bibitem{G22a}
Irfan Glogi{\'c}.
\newblock {Stable blowup for the supercritical hyperbolic Yang-Mills
  equations}.
\newblock {\em Advances in Mathematics}, 408:108633, 2022.

\bibitem{KL83}
L.~V. Kapitanskii and O.~A. Ladyzhenskaya.
\newblock The {Coleman} principle for finding stationary points of invariant
  functionals.
\newblock {\em Zap. Nauchn. Sem. Leningrad. Otdel. Mat. Inst. Steklov. (LOMI)},
  15(127):84--102, 1983.

\bibitem{L21}
Dong Li.
\newblock {Global well-posedness of hedgehog solutions for the (3+1) Skyrme
  model}.
\newblock {\em Duke Math. J.}, 170(7):1377--1418, 2021.

\bibitem{MS04}
Nicholas Manton and Paul Sutcliffe.
\newblock {\em Topological Solitons}.
\newblock Cambridge Monographs on Mathematical Physics. Cambridge University
  Press, 2004.

\bibitem{MT91}
J.~B. McLeod and W.~C. Troy.
\newblock The {Skyrme} model for nucleons under spherical symmetry.
\newblock {\em Proceedings of the Royal Society of Edinburgh Section A:
  Mathematics}, 118(3-4):271--288, 1991.

\bibitem{M20}
Michael McNulty.
\newblock Development of singularities of the {Skyrme} model.
\newblock {\em Journal of Hyperbolic Differential Equations}, 17(01):61--73,
  2020.

\bibitem{P83}
A.~Pazy.
\newblock {\em Semigroups of Linear Operators and Applications to Partial
  Differential Equations}.
\newblock Applied Mathematical Sciences. Springer New York, NY, 1 edition,
  1983.

\bibitem{R12}
Jeffrey Rauch.
\newblock {\em Hyperbolic partial differential equations and geometric optics},
  volume 133 of {\em Graduate Studies in Mathematics}.
\newblock American Mathematical Society, Providence, RI, 2012.

\bibitem{R76}
Walter Rudin.
\newblock {\em Principles of Mathematical Analysis}.
\newblock International Series in Pure and Applied Math- International Series
  in Pure and Applied Mathematics. McGraw-Hill Book Co., New
  York-Auckland-D{\"u}sseldorf, 3rd edition, 1976.

\bibitem{S88}
Jalal Shatah.
\newblock {Weak solutions and development of singularities of the $SU(2)$
  $\sigma$-model}.
\newblock {\em Communications on Pure and Applied Mathematics}, 41(4):459--469,
  1988.

\bibitem{S61a}
T.H.R. Skyrme.
\newblock A non-linear field theory.
\newblock {\em Proceedings of the Royal Society of London. Series A.
  Mathematical and Physical Sciences}, 260(1300):127--138, 1961.

\bibitem{S61b}
T.H.R. Skyrme.
\newblock Particle states of a quantized meson field.
\newblock {\em Proceedings of the Royal Society of London. Series A.
  Mathematical and Physical Sciences}, 262(1309):237--245, 1961.

\bibitem{S62}
T.H.R. Skyrme.
\newblock A unified field theory of mesons and baryons.
\newblock {\em Nuclear Physics}, 31:556--569, 1962.

\bibitem{W11}
Willie Wai-Yeung Wong.
\newblock Regular hyperbolicity, dominant energy condition and causality for
  {Lagrangian} theories of maps.
\newblock {\em Classical and Quantum Gravity}, 28(21):215008, sep 2011.

\end{thebibliography}

\end{document}